\newcommand{\EF}{\operatorname{EF}}
\renewcommand{\k}{\kappa}
\newcommand{\ran}{\operatorname{ran}}
\renewcommand{\o}{\omega}
\newcommand{\A}{\mathcal{A}}
\newcommand{\B}{\mathcal{B}}
\renewcommand{\a}{\alpha}
\renewcommand{\Diamond}{\diamondsuit}
\renewcommand{\leq}{\leqslant}
\renewcommand{\ge}{\geqslant}
\newcommand{\rest}{\restriction}
\theoremstyle{definition}
\newtheorem{definition}{Definition}[section]
\theoremstyle{plain}
\newtheorem*{thm*}{Theorem}
\newtheorem{theorem}[definition]{Theorem}
\newtheorem{lemma}[definition]{Lemma}
\newtheorem{claim}[definition]{Claim}%[definicion]
\newtheorem{corollary}[definition]{Corollary}
\newtheorem{Question}[definition]{Question}
\theoremstyle{remark}
\newtheorem{Remark}{Remark}
\title{A Borel-reducibility Counterpart of Shelah's Main Gap Theorem}
\author{Tapani Hyttinen, Vadim Kulikov, Miguel Moreno\\ University of Helsinki}
\date{}
\begin{document}
\maketitle
\begin{abstract}
  We study the Borel-reducibility of isomorphism relations of
complete first order theories and show the consistency of the
following: For all such theories T and T', if T is classifiable
and T' is not, then the isomorphism of models of T' is strictly above
the isomorphism of models of T with respect to Borel-reducibility.  In
fact, we can also ensure that a range of equivalence relations modulo
various non-stationary ideals are strictly between those isomorphism
relations.  The isomorphism relations are considered on models of some
fixed uncountable cardinality obeying certain restrictions.
  
\end{abstract}

\section{Introduction}

Throughout this article we assume that $\kappa$ is an uncountable
cardinal that satisfies $\kappa^{<\kappa}=\kappa$. The generalized
Baire space is the set $\kappa^\kappa$ with the bounded topology.  For
every $\zeta\in \kappa^{<\kappa}$, the set 
$$[\zeta]=\{\eta\in \kappa^\kappa \mid \zeta\subset \eta\}$$ 
is a basic open set. The open sets are of the form $\bigcup X$ where
$X$ is a collection of basic open sets. The collection of
$\kappa$-Borel subsets of $\kappa^\kappa$ is the smallest set which
contains the basic open sets and is closed under unions and
intersections, both of length $\kappa$.  A $\k$-Borel set is any
element of this collection.  We usually omit the prefix ``$\kappa-$''.
In \cite{Vau} Vought studied this topology in the case
$\kappa=\omega_1$ assuming CH and proved the following:
\begin{thm*}
  A set $B\subset \omega_1^{\omega_1}$ is Borel and closed under
  permutations if and only if there is a sentence $\varphi$ in
  $L_{\omega_1^+\omega_1}$ such that
  $B=\{\eta\mid \mathcal{A}_\eta\models\varphi\}$.
\end{thm*}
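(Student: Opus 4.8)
The plan is to prove this as the $\omega_1$-analogue of the classical L\'{o}pez--Escobar theorem. First I would fix a decoding assigning to each $\eta\in\omega_1^{\omega_1}$ a structure $\mathcal{A}_\eta$ with universe $\omega_1$, via a fixed bijection between $\omega_1$ and the set of atomic formulas with parameters in $\omega_1$, so that the coordinates of $\eta$ spell out the atomic diagram of $\mathcal{A}_\eta$. The permutation group $S$ of $\omega_1$ acts on $\omega_1^{\omega_1}$ by letting $\pi\cdot\eta$ be the code of $\pi(\mathcal{A}_\eta)$, and ``closed under permutations'' means invariance under this action. Since satisfaction of an $L_{\omega_1^+\omega_1}$-sentence is preserved by isomorphism, every set $\{\eta\mid\mathcal{A}_\eta\models\varphi\}$ is automatically invariant; hence for the forward direction only Borelness needs to be verified, while the converse is the substantial half.

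For the forward direction I would induct on the build-up of $\varphi$, proving the stronger claim that for each formula $\psi(\bar x)$ with $<\omega_1$ free variables and each assignment $\bar a$ of those variables by ordinals below $\omega_1$, the set $\{\eta\mid\mathcal{A}_\eta\models\psi[\bar a]\}$ is Borel. Atomic formulas determine basic clopen sets depending on a single coordinate of $\eta$; negation gives complementation; a conjunction or disjunction of length $\leq\omega_1$ gives an intersection or union of length $\leq\kappa$, which is Borel by definition. The one delicate case is a quantifier block $\exists\bar x$ of countable length, where the set in question is the union, over all $\bar a\in\omega_1^{<\omega_1}$ of the appropriate length, of Borel sets. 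This is precisely where the standing hypothesis $\kappa^{<\kappa}=\kappa$, i.e.\ CH, is used: there are at most $\kappa^{<\kappa}=\kappa$ such assignments, so the union has length $\kappa$ and remains Borel.

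For the converse I would strengthen the statement so that an induction on Borel rank can carry parameters, the point being that the basic open sets $[\zeta]$ are not themselves permutation-invariant. For each Borel set $B$ and each condition $\zeta\in\omega_1^{<\omega_1}$, which mentions only countably many ordinals, enumerated injectively as $\bar c$, I would construct by recursion on the rank of $B$ a formula $\varphi_{B,\zeta}(\bar x)$ of $L_{\omega_1^+\omega_1}$, in the countably many free variables $\bar x$ matching $\bar c$, characterized by: $\mathcal{A}_\eta\models\varphi_{B,\zeta}[\bar b]$ iff there is $\pi\in S$ taking $\bar c$ to $\bar b$ with $\pi^{-1}\cdot\eta\in B\cap[\zeta]$. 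Basic open sets give quantifier-free formulas, namely countable conjunctions of literals recording the atomic diagram that $\zeta$ imposes on $\bar x$; complements give negations; $\kappa$-unions give disjunctions of length $\leq\kappa<\omega_1^+$, legal in the logic. Applying the construction to an invariant $B$ with the empty condition and existentially closing off the parameters by a countable quantifier block then yields the required sentence, and the invariance of $B$ is exactly what ensures this existential closure defines $B$ itself rather than a larger, saturated set.

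The step I expect to be hardest is verifying the characterizing property of $\varphi_{B,\zeta}$ through the successor and limit stages while respecting the length budget of $L_{\omega_1^+\omega_1}$. Two issues demand care. First, when $B$ is written as a $\kappa$-union the conditions $\zeta$ of the subformulas may introduce new ordinals, and absorbing them calls for exactly the countable ($<\omega_1$) existential blocks that the logic allows; one must check that only countably many new parameters appear at a time, which again rests on $\kappa^{<\kappa}=\kappa$. Second, the equivalence between satisfaction of $\varphi_{B,\zeta}$ and the existence of a permutation dragging the code into $B$ must be shown independent of the chosen enumeration and compatible with the action of $S$; closing the induction here requires a back-and-forth between codes that exploits the invariance of $B$.
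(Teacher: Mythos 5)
Your forward direction is sound, and it is where CH does the work you say it does: invariance of $\{\eta\mid\mathcal{A}_\eta\models\varphi\}$ is automatic from isomorphism-invariance of satisfaction, and the induction through atomic formulas, negations, conjunctions of length $\leq\omega_1$, and countable quantifier blocks stays inside the Borel sets because under CH there are only $\omega_1^{\aleph_0}=\omega_1$ assignments to a countable block of variables. (For the record, the paper does not prove this theorem at all; it quotes it from Vaught's paper, so your argument has to stand on its own.) The converse, however, has a genuine gap at the complementation step, precisely the step your plan passes over with "complements give negations". Your characterizing property for $\varphi_{B,\zeta}$ is existential over the permutation group: $\mathcal{A}_\eta\models\varphi_{B,\zeta}[\bar b]$ iff \emph{some} $\pi$ taking $\bar c$ to $\bar b$ has $\pi^{-1}\cdot\eta\in B\cap[\zeta]$. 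This property is not self-dual: $\neg\varphi_{B,\zeta}[\bar b]$ asserts that \emph{every} such $\pi$ has $\pi^{-1}\cdot\eta\notin B\cap[\zeta]$, whereas what the recursion needs for the complement of $B$ is that \emph{some} $\pi$ has $\pi^{-1}\cdot\eta\in[\zeta]\setminus B$. These really differ, because the sets arising inside the induction (unlike the top-level invariant set) are not closed under permutations: one $\pi$ can drag $\eta$ into $B\cap[\zeta]$ while another drags it into $[\zeta]\setminus B$. For the same reason your proposed repair, a back-and-forth "exploiting the invariance of $B$", is unavailable: invariance is not an inductive hypothesis, it holds only for the final set and not for the pieces of its Borel decomposition. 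Switching to a universal characterization at complements does not help either, since "for all $\pi$, $\pi^{-1}\cdot\eta\in\bigcup_\alpha B_\alpha$" does not decompose as a disjunction, so the union step then breaks.

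The missing idea is the category quantifier (the Vaught transform), which is how Vaught actually proves the theorem. To each Borel $B$ and condition $\zeta$ one attaches \emph{two} formulas, defining $B^{\Delta\zeta}$ ("non-meagerly many permutations compatible with $\zeta$ move $\eta$ into $B$") and $B^{*\zeta}$ ("comeagerly many do"). These transforms are dual under complementation --- the complement of $B^{\Delta\zeta}$ is exactly $(\omega_1^{\omega_1}\setminus B)^{*\zeta}$ --- and the $\Delta$-transform commutes with unions of length $\omega_1$ by the $\omega_1$-Baire category theorem together with a localization argument (a non-meager set is comeager below some stronger condition); this is the second place CH is indispensable. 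With the pair of transforms the induction closes in both directions, and at the top level, for an invariant $B$ and the empty condition, both transforms equal $B$ itself, yielding the desired sentence. Without this device, or an equivalent one, your recursion cannot pass through complements, so the proof as written does not go through.
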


This result was generalized in \cite{FHK13} to arbitrary $\kappa$
that satisfies $\kappa^{<\kappa}=\kappa$. Mekler and
V\"{a}\"{a}n\"{a}nen continued the study of this topology in~\cite{MV}.

We will work with the subspace $2^\kappa$ with the relative subspace
topology.  A function $f\colon 2^\kappa\rightarrow 2^\kappa$ is \emph{Borel}, 
if for every open set $A\subseteq 2^\kappa$ the inverse image
$f^{-1}[A]$ is a Borel subset of $2^\kappa$. Let $E_1$ and $E_2$ be
equivalence relations on $2^\kappa$. We say that $E_1$ is 
\emph{Borel reducible} to $E_2$, if there is a Borel function $f\colon
2^\kappa\rightarrow 2^\kappa$ that satisfies $(x,y)\in E_1\Leftrightarrow
(f(x),f(y))\in E_2$.  We call $f$ a \emph{reduction} of $E_1$ to
$E_2$. This is denoted by $E_1\le_B E_2$ and if $f$ is continuous,
then we say that $E_1$ is \emph{continuously reducible} to $E_2$ and
this is denoted by $E_1\le_c E_2$.

The following is a standard way to code structures with domain $\kappa$
with elements of $2^\kappa$.  To define it, fix a countable relational
vocabulary $\mathcal{L}=\{P_n\mid n<\omega\}$. 

\begin{definition}
  Fix a bijection $\pi\colon \kappa^{<\omega}\to \kappa$. For every
  $\eta\in 2^\kappa$ define the $\mathcal{L}$-structure
  $\mathcal{A}_\eta$ with domain $\kappa$ as follows:
  For every relation $P_m$ with arity $n$, every tuple
  $(a_1,a_2,\ldots , a_n)$ in $\kappa^n$ satisfies 
  $$(a_1,a_2,\ldots , a_n)\in P_m^{\mathcal{A}_\eta}\Longleftrightarrow \eta(\pi(m,a_1,a_2,\ldots,a_n))=1.$$
\end{definition}

Note that for every $\mathcal{L}$-structure $\mathcal{A}$ there exists
$\eta\in 2^\kappa$ with $\mathcal{A}=\mathcal{A}_\eta$.  For club many
$\alpha<\kappa$ we can also code the $\mathcal{L}$-structures with
domain $\alpha$:

\begin{definition}
  Denote by $C_\pi$ the club
  $\{\alpha<\kappa\mid \pi[\alpha^{<\omega}]\subseteq \alpha\}$. For every
  $\eta\in 2^\kappa$ and every $\alpha\in C_\pi$ define the structure
  $\mathcal{A}_{\eta\restriction \alpha}$ with domain $\alpha$ as
  follows:
  For every relation $P_m$ with arity $n$, every tuple
  $(a_1,a_2,\ldots , a_n)$ in $\alpha^n$ satisfies $$(a_1,a_2,\ldots ,
  a_n)\in P_m^{\mathcal{A}_{\eta\restriction \alpha}}\Longleftrightarrow
  \eta\restriction_\alpha (\pi(m,a_1,a_2,\ldots,a_n))=1.$$
\end{definition}

For every $\alpha\in C_\pi$ and every $X\subseteq \alpha$ we will
denote the structure $\mathcal{A}_{F}$ by $\mathcal{A}_X$, where $F$
is the characteristic function of $X$.  We will work with two
equivalence relations on $2^\kappa$: the isomorphism relation and the equivalence
 modulo the non-stationary ideal.

\begin{definition}[The isomorphism relation] Assume $T$ is a complete first order
  theory in a countable vocabulary. We define $\cong^\kappa_T$ as the
  relation $$\{(\eta,\xi)\in 2^\kappa\times
  2^\kappa\mid (\mathcal{A}_\eta\models T, \mathcal{A}_\xi\models T,
  \mathcal{A}_\eta\cong \mathcal{A}_\xi)\text{ or }
  (\mathcal{A}_\eta\not\models T, \mathcal{A}_\xi\not\models T)\}.$$
\end{definition}
We will omit the superscript ``$\kappa$'' in $\cong^\kappa_T$ when it
is clear from the context. For every first order theory $T$ in a countable
vocabulary there is an isomorphism relation associated with $T$,
$\cong^\kappa_T$. For every stationary $X\subset \kappa$, we define
an equivalence relation modulo the non-stationary ideal associated
with $X$:

\begin{definition} For every $X\subset \kappa$ stationary, we define
  $E_X$ as the relation $$E_X=\{(\eta,\xi)\in 2^\k\times 2^\k\mid
  (\eta^{-1}[1]\triangle\xi^{-1}[1])\cap X \text{ is not
    stationary}\}$$ where $\triangle$ denotes the symmetric
  difference.
\end{definition}

For every regular cardinal $\mu<\kappa$ denote $\{\alpha<\kappa \mid
cf(\alpha)=\mu\}$ by $S_\mu^\kappa$.  A set $C$ is $\mu$-\emph{club}
if it is ubounded and closed under $\mu$-limits, i.e. if
$S^\k_\mu\setminus C$ is non-stationary. 
Accordingly, we will denote the equivalence relation $E_{X}$ for
$X=S^\kappa_\mu$ by $E^2_{\mu\text{-club}}$. Note that $(f,g)\in
E^2_{\mu\text{-club}}$ if and only if the set $\{\alpha<\k\mid f(\a)=g(\a)\}$
contains a $\mu$-club.

%%%%%%%%%%%%%%%%%%%%%%%%%%%%%

\section{Reduction to $E_X$}

Classifiable theories (superstable with NOTOP and NDOP) have a close
connection to the Ehrenfeucht-Fra\"{i}ss\'e games (EF-games for short). We will use them
to study the reducibility of the
isomorphism relation of classifiable theories. The following
definition is from \cite[Def 2.3]{HM15}:

\begin{definition}[The Ehrenfeucht-Fra\"{i}ss\'e game] 
  Fix an enumeration $\{X_\gamma\}_{\gamma<\kappa}$ of the elements of
  $\mathcal{P}_\kappa(\kappa)$ and an enumeration
  $\{f_\gamma\}_{\gamma<\kappa}$ of all the functions with both the domain and range
  in $\mathcal{P}_\kappa(\kappa)$. 
  For every $\alpha\leq\kappa$ the game $\EF^\alpha_\omega(\mathcal{A}\restriction_\alpha,\mathcal{B}\restriction_\alpha)$
on the restrictions 
$\mathcal{A}\rest\a$ and $\mathcal{B}\rest\a$
of the structures $\mathcal{A}$ and $\mathcal{B}$ with domain $\kappa$ is
defined as follows: In the $n$-th move, first ${\bf I}$ chooses an
ordinal $\beta_n<\alpha$ such that $X_{\beta_n}\subset \alpha$ and
$X_{\beta_{n-1}}\subseteq X_{\beta_n}$. Then ${\bf II}$ chooses an
ordinal $\theta_n<\alpha$ such that
$dom(f_{\theta_n}),\ran(f_{\theta_n})\subset \alpha$,
$X_{\beta_n}\subseteq dom(f_{\theta_n})\cap \ran(f_{\theta_n})$ and
$f_{\theta_{n-1}}\subseteq f_{\theta_n}$ (if $n=0$ then
$X_{\beta_{n-1}}=\emptyset$ and $f_{\theta_{n-1}}=\emptyset$). The
game ends after $\omega$ moves. Player ${\bf II}$ wins if
$\bigcup_{i<\omega}f_{\theta_i}\colon A\restriction_\alpha\rightarrow
B\restriction_\alpha$ is a partial isomorphism. Otherwise player
${\bf I}$ wins. If $\a=\k$ then this is the same as the
standard $\EF$-game which is usually denoted by $\EF^\k_\o$.
 
  When a player $P$ has a winning strategy in a game $G$, 
  we denote it by~$P\uparrow G$.
\end{definition}

The following lemma is proved in \cite[Lemma 2.4]{HM15} and is used in
the main result of this section which in turn is central to the main
theorem of this paper.
\begin{lemma}\label{lem:EFClub}
  If $\mathcal{A}$ and $\mathcal{B}$ are structures with domain $\kappa$, then
\begin{itemize}
\item ${\bf II}\uparrow\EF^\kappa_\omega(\mathcal{A},\mathcal{B})\Longleftrightarrow {\bf II}\uparrow\EF^\a_\omega (\mathcal{A}\restriction_\alpha,\mathcal{B}\restriction_\alpha)$ for
  club-many $\alpha$,
%%Miguel: if you define \newcommand{\k}{\kappa}, \newcommand{\A}{\mathcal{A}} etc, 
%% you can get the above line into this:
%% $\PlTwo\wins\EF^\k_\o(\A,\B)\iff \PlTwo\wins\EF^\k_\o (\A\rest\a,\B\rest\a)$
%% Just saying ;) 
\item ${\bf I}\uparrow\EF^\kappa_\omega (\mathcal{A},\mathcal{B})\Longleftrightarrow {\bf I}\uparrow\EF^\a_\omega (\mathcal{A}\restriction_\alpha,\mathcal{B}\restriction_\alpha)$ for club-many $\alpha$. \qed
\end{itemize}
\end{lemma}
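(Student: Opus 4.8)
The plan is to deduce both equivalences from a single \emph{restriction} principle together with the \emph{determinacy} of the games, rather than by trying to glue the local winning strategies directly: the latter fails, since for distinct $\alpha$ the strategies witnessing ${\bf II}\uparrow\EF^\alpha_\o$ need not cohere, and a player in $\EF^\k_\o$ must commit to moves online without knowing how far player ${\bf I}$'s moves will eventually reach.

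First I would prove the two forward implications by restriction. Fix a player $P\in\{{\bf I},{\bf II}\}$ and a winning strategy $\sigma$ for $P$ in $\EF^\k_\o(\A,\B)$. A strategy is a function on finite sequences of ordinals $<\k$, and since $\k^{<\k}=\k$ forces $\k$ to be regular, the set $C_\sigma$ of ordinals $\alpha$ \emph{closed under} $\sigma$ --- meaning that whenever $P$ is fed moves indexing sets $X_\gamma$ and functions $f_\gamma$ supported below $\alpha$, the index $\sigma$ returns is again $<\alpha$ and names a set/function supported below $\alpha$ --- is a club; one closes off under $\sigma$ and under the maps $\gamma\mapsto\sup X_\gamma$ and $\gamma\mapsto\sup(\operatorname{dom}f_\gamma\cup\ran f_\gamma)$, then intersects with $C_\pi$ and with the (club) set of $\alpha$ for which the enumerations listed below $\alpha$ give exactly the sets and functions supported in $\alpha$. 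For $\alpha\in C_\sigma$ the restriction of $\sigma$ is a legal strategy in $\EF^\alpha_\o(\A\rest_\alpha,\B\rest_\alpha)$, because every legal move of the $\alpha$-game is also legal in the $\k$-game; hence any play following it is a genuine $\sigma$-play of $\EF^\k_\o$, and as its domain and range lie below $\alpha\in C_\pi$, the resulting union $\bigcup_i f_{\theta_i}$ is a partial isomorphism of $\A\rest_\alpha,\B\rest_\alpha$ exactly when it is one of $\A,\B$. Thus $P$ wins $\EF^\alpha_\o$ for every $\alpha\in C_\sigma$, which gives the $\Rightarrow$ direction of both bullets uniformly.

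The crux is the next observation: for every $\alpha\le\k$ the game $\EF^\alpha_\o$ is determined. Player ${\bf I}$ wins a play precisely when the increasing union $\bigcup_i f_{\theta_i}$ fails to be a partial isomorphism, and any such failure --- non-injectivity, or a relation holding on one side but not the other for a tuple already in the domain --- is witnessed by finitely many values committed at some finite stage. Hence ${\bf I}$'s winning set is open in the product space $\alpha^\o$, and by the Gale--Stewart theorem open games of length $\o$ are determined for an \emph{arbitrary} set of moves; this is exactly the point at which the large, $\k$-sized move set causes no trouble. So $\EF^\alpha_\o$ is determined for all $\alpha\le\k$, in particular for $\alpha=\k$.

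Finally the converse implications follow formally. Suppose ${\bf II}$ wins $\EF^\alpha_\o$ for all $\alpha$ in a club $E$, yet ${\bf II}$ has no winning strategy in $\EF^\k_\o$. By determinacy of $\EF^\k_\o$ player ${\bf I}$ then wins $\EF^\k_\o$, so by the forward implication already established ${\bf I}$ wins $\EF^\alpha_\o$ for all $\alpha$ in some club $E'$. For any $\alpha\in E\cap E'$ (nonempty, as clubs intersect) both players would have winning strategies in the single game $\EF^\alpha_\o$, which is absurd. Hence ${\bf II}\uparrow\EF^\k_\o$, giving $\Leftarrow$ of the first bullet; the $\Leftarrow$ of the second bullet is identical with the roles of ${\bf I}$ and ${\bf II}$ interchanged. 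I expect the main obstacle to be isolating and justifying the determinacy step, since the games are not countable-move games and one must verify that ${\bf I}$'s payoff is genuinely open so that Gale--Stewart applies regardless of the cardinality of the move set.
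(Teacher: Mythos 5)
Your proposal is essentially correct, and it is worth noting that this paper does not actually prove the lemma: it imports it from \cite[Lemma 2.4]{HM15} (hence the \qed\ in the statement). Your architecture --- forward implications by restricting a winning strategy to the club of ordinals closed under it, and backward implications by determinacy --- is in substance the standard argument behind the cited result. The two load-bearing points are both handled correctly: (i) a failure of $\bigcup_{i<\o}f_{\theta_i}$ to be a partial isomorphism is witnessed by a finite tuple already present in some $f_{\theta_n}$ and persists under extension, so Player ${\bf I}$'s payoff is open on the tree of legal plays and Gale--Stewart determinacy applies regardless of the size of the move set; (ii) once $\EF^\k_\o$ is determined, ``${\bf II}$ wins club-many $\alpha$-games but not the $\k$-game'' would give, via the forward implication for ${\bf I}$, an $\alpha$ in the intersection of two clubs at which both players win $\EF^\a_\o(\A\rest_\a,\B\rest_\a)$, which is absurd.

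One assertion in your club construction is false as stated and should simply be deleted: you intersect $C_\sigma$ with ``the (club) set of $\alpha$ for which the enumerations listed below $\alpha$ give exactly the sets and functions supported in $\alpha$.'' By Cantor's theorem there are $2^{|\alpha|}>|\alpha|$ subsets of $\alpha$, so no infinite $\alpha<\k$ can list all of them with indices below $\alpha$; that set is empty, and taken literally the intersection would annihilate your club and void the forward direction. Fortunately your argument never uses this property. What is needed is exactly what you describe first: closure of $\alpha$ under $\sigma$ and under the maps $\gamma\mapsto\sup X_\gamma$ and $\gamma\mapsto\sup(\operatorname{dom}f_\gamma\cup\ran f_\gamma)$, so that $\sigma$'s replies to $\alpha$-legal positions are again $\alpha$-legal; questions of whether a player \emph{can} move in the $\alpha$-game are absorbed by the convention that a stuck player loses, together with the observation that a winning strategy never leads its owner into a stuck position (otherwise that play would refute its winningness). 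With that intersection removed, the proof stands.
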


\begin{Remark}
  In \cite[Lemma 2.7]{HM15} it was proved that there exists a club $C_{\EF}$
  of $\alpha$ such that the relation defined by the game
  $$\{(\A,\B)\mid {\bf II}\uparrow\EF^\a_\o(\A\rest_\a,\B\rest_\a)\}$$
  is an equivalence relation.
\end{Remark}

\begin{Remark}\label{rem:Shelah}
  Shelah proved in \cite{Sh}, that if $T$ is classifiable then every
  two models of $T$ that are $L_{\infty,\kappa}$-equivalent are
  isomorphic. On the other hand $L_{\infty,\kappa}$-equivalence is
  equivalent to $\EF_\omega^\kappa$-equivalence. So for every 
  two models $\mathcal{A}$ and $\mathcal{B}$ of $T$ we have 
  ${\bf II}\uparrow\EF^\kappa_\omega (\mathcal{A},\mathcal{B})\Longleftrightarrow
  \mathcal{A}\cong \mathcal{B}$ and ${\bf I}\uparrow\EF^\kappa_\omega
  (\mathcal{A},\mathcal{B})\Longleftrightarrow \mathcal{A}\ncong
  \mathcal{B}$.
\end{Remark}

\begin{lemma}\label{cor:ClassCub}
  Assume $T$ is a classifiable theory and $\mu<\kappa$ is a regular
  cardinal. If $\Diamond_\kappa (X)$ holds then $\cong^\k_T$
  is continuously reducible to $E_X$.
\end{lemma}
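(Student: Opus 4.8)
The plan is to use the $\Diamond_\k(X)$-sequence to compare both input structures, level by level, against a common guessed structure, recording at each $\a\in X$ a single bit that says whether player ${\bf II}$ wins the corresponding EF-game. Fix a sequence $\langle S_\a : \a\in X\rangle$ with $S_\a\subseteq\a$ such that for every $S\subseteq\k$ the set $\{\a\in X : S\cap\a=S_\a\}$ is stationary, and let $C=C_\pi\cap C_{\EF}$ be the club on which both the coding of $\a$-structures and the equivalence relation from the Remark yielding $C_{\EF}$ are available. I would then define $f\colon 2^\k\to 2^\k$ by
\[
f(\eta)(\a)=1 \iff \a\in X\cap C,\ \A_{\eta\rest\a}\models T,\ \text{and}\ {\bf II}\uparrow\EF^\a_\o(\A_{\eta\rest\a},\A_{S_\a}),
\]
and $f(\eta)(\a)=0$ otherwise. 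Since $\A_{\eta\rest\a}$, the truth value of $\A_{\eta\rest\a}\models T$, and the game $\EF^\a_\o(\A_{\eta\rest\a},\A_{S_\a})$ all depend only on $\eta\rest\a$ and the fixed data, $f(\eta)\rest\beta$ is determined by $\eta\rest\beta$, so $f$ is continuous. (The hypothesis that $\mu$ is regular is not needed below; only the stationarity of $X$, which follows from $\Diamond_\k(X)$, is used.)

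For the forward direction assume $(\eta,\xi)\in\cong^\k_T$. If both $\A_\eta,\A_\xi\not\models T$, then by closing under Skolem functions $\A_{\eta\rest\a}\prec\A_\eta$ and $\A_{\xi\rest\a}\prec\A_\xi$ for club-many $\a$, so $\A_{\eta\rest\a}\not\models T$ and $\A_{\xi\rest\a}\not\models T$ there, forcing $f(\eta)$ and $f(\xi)$ to vanish on a club. If instead both model $T$ and $\A_\eta\cong\A_\xi$, then by Remark~\ref{rem:Shelah} and Lemma~\ref{lem:EFClub} player ${\bf II}$ wins $\EF^\a_\o(\A_{\eta\rest\a},\A_{\xi\rest\a})$ for club-many $\a$; intersecting with $C$ and the two reflection clubs, transitivity of the relation from the $C_{\EF}$-Remark gives ${\bf II}\uparrow\EF^\a_\o(\A_{\eta\rest\a},\A_{S_\a})\iff{\bf II}\uparrow\EF^\a_\o(\A_{\xi\rest\a},\A_{S_\a})$, while the modelhood clause holds on both sides, so $f(\eta)(\a)=f(\xi)(\a)$. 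In either case $f(\eta)$ and $f(\xi)$ agree on a club, hence their symmetric difference is non-stationary and $(f(\eta),f(\xi))\in E_X$.

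For the backward direction assume $(\eta,\xi)\notin\cong^\k_T$; here I must produce a \emph{stationary} disagreement inside $X$. Up to the symmetric roles of $\eta,\xi$ I may assume $\A_\eta\models T$. By $\Diamond_\k(X)$ the set $G=\{\a\in X : \eta^{-1}[1]\cap\a=S_\a\}$ is stationary, and for $\a\in G\cap C$ we have $\A_{S_\a}=\A_{\eta\rest\a}$, so ${\bf II}$ wins $\EF^\a_\o(\A_{\eta\rest\a},\A_{S_\a})$ via the identity; together with the reflection club on which $\A_{\eta\rest\a}\models T$ this gives $f(\eta)(\a)=1$ on a stationary subset of $G$. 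To force $f(\xi)(\a)=0$ there: if $\A_\xi\not\models T$, the reflection club on which $\A_{\xi\rest\a}\not\models T$ already gives it; if $\A_\xi\models T$ but $\A_\eta\ncong\A_\xi$, then by Remark~\ref{rem:Shelah} player ${\bf I}$ wins $\EF^\k_\o(\A_\eta,\A_\xi)$, so by Lemma~\ref{lem:EFClub} player ${\bf I}$ wins $\EF^\a_\o(\A_{\eta\rest\a},\A_{\xi\rest\a})$ for club-many $\a$, whence by symmetry of the game in its two arguments and $\A_{S_\a}=\A_{\eta\rest\a}$ player ${\bf II}$ cannot win $\EF^\a_\o(\A_{\xi\rest\a},\A_{S_\a})$ there. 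Intersecting the stationary $G$ with these clubs keeps it stationary, so $f(\eta)$ and $f(\xi)$ differ on a stationary subset of $X$ and $(f(\eta),f(\xi))\notin E_X$.

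The main obstacle is the interaction of the reduction with the non-models of $T$, which $\cong^\k_T$ collapses into a single class: a naive EF-game reduction would separate non-isomorphic non-models and so break the forward direction. Inserting the clause ``$\A_{\eta\rest\a}\models T$'' and the reflection argument — so that local modelhood of $T$ agrees with global modelhood on a club — repairs this without destroying continuity. The second delicate point is that the backward direction needs a genuinely stationary disagreement rather than a merely non-club one; this is exactly what $\Diamond_\k(X)$ provides, by letting us guess $\eta$ so that $\A_{S_\a}$ coincides with $\A_{\eta\rest\a}$ on a stationary set, against which the club supplied by Remark~\ref{rem:Shelah} and Lemma~\ref{lem:EFClub} can safely be intersected.
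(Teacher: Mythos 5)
Your proof is correct and takes essentially the same route as the paper's own: the identical reduction (the bit at $\alpha$ records $\alpha\in X\cap C_\pi\cap C_{\EF}$, local modelhood $\mathcal{A}_{\eta\restriction\alpha}\models T$, and a win for ${\bf II}$ in $\EF^\a_\o$ against $\mathcal{A}_{S_\alpha}$), the same use of Lemma~\ref{lem:EFClub}, Remark~\ref{rem:Shelah} and the $\Diamond_\kappa(X)$-sequence, and the same continuity argument. The only cosmetic differences are that you handle the right-to-left direction directly (WLOG $\mathcal{A}_\eta\models T$, using symmetry of the game and $\mathcal{A}_{S_\alpha}=\mathcal{A}_{\eta\restriction\alpha}$) where the paper splits into cases and argues the mixed case by contradiction via transitivity on $C_{\EF}$.
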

\begin{proof}
  Let $\{S_\alpha\mid \alpha\in X\}$ be a sequence testifying
  $\Diamond_\kappa (X)$ and define the function
  $\mathcal{F}\colon 2^\kappa\to 2^\kappa$ by 
  $$\mathcal{F}(\eta)(\alpha)=\begin{cases} 1 &\mbox{if } \alpha\in X\cap C_\pi\cap C_{EF},\ {\bf II}\uparrow \textit{EF}^\kappa_\omega (\mathcal{A}_\eta\restriction_\alpha, \mathcal{A}_{S_\alpha}) \mbox{ and } \mathcal{A}_\eta\restriction_\alpha\models T\\
    0 & \mbox{otherwise. } \end{cases}
  $$
  Let us show that $\mathcal{F}$ is a reduction of $\cong_T$ to
  $E_X$, i.e. for every $\eta,\xi\in 2^\kappa$,
  $(\eta,\xi)\in\ \cong_T$ if and only if
  $(\mathcal{F}(\eta),\mathcal{F}(\xi))\in
  E_X$. Notice that when $\alpha\in C_\pi$, the
  structure $\mathcal{A}_{\eta\restriction\alpha}$ is defined and equals
  $\mathcal{A}_\eta\restriction_\alpha$.

  Consider first the direction from left to right.  Suppose first that
  $\mathcal{A}_\eta$ and $\mathcal{A}_\xi$ are models of $T$ and
  $\mathcal{A}_\eta\cong \mathcal{A}_\xi$.  Since $\mathcal{A}_\eta\cong
  \mathcal{A}_\xi$, we have ${\bf II}\uparrow$ EF$^\kappa_\omega
  (\mathcal{A}_\eta,\mathcal{A}_\xi)$. By Lemma \ref{lem:EFClub} there is a club $C$
  such that ${\bf II}\uparrow\EF^\a_\omega
  (\mathcal{A}_\eta\restriction_\alpha,\mathcal{A}_\xi\restriction_\alpha)$
  for every $\alpha$ in $C$. Since the set
  $\{\alpha<\kappa\mid \mathcal{A}_\eta\restriction_\alpha\models T,
  \mathcal{A}_\xi\restriction_\alpha\models T\}$ contains a club, we can
  assume that every $\alpha\in C$ satisfies
  $\mathcal{A}_\eta\restriction_\alpha\models T$ and
  $\mathcal{A}_\xi\restriction_\alpha\models T$.  If $\alpha\in C$ is
  such that $\mathcal{F}(\eta)(\alpha)=1$, then 
  ${\bf II}\uparrow\EF^\a_\omega (\mathcal{A}_\eta\restriction_\alpha,
  \mathcal{A}_{S_\alpha})$. Since ${\bf II}\uparrow \EF^\a_\omega
  (\mathcal{A}_\eta\restriction_\alpha,\mathcal{A}_\xi\restriction_\alpha)$
  and $\alpha\in C_{EF}$, we can conclude that ${\bf II}\uparrow
  \EF^\a_\omega
  (\mathcal{A}_\xi\restriction_\alpha,\mathcal{A}_{S_\alpha})$. Therefore
  for every $\alpha\in C$, $\mathcal{F}(\eta)(\alpha)=1$ implies
  $\mathcal{F}(\xi)(\alpha)=1$. Using the same argument it can be shown
  that for every $\alpha\in C$, $\mathcal{F}(\xi)(\alpha)=1$ implies
  $\mathcal{F}(\eta)(\alpha)=1$. Therefore $\mathcal{F}(\eta)$ and
  $\mathcal{F}(\xi)$ coincide in a
  club and $(\mathcal{F}(\eta),\mathcal{F}(\xi))\in
  E_X$.
  
  Let us now look at the case where $(\eta,\xi)\in \ \cong_T$ and
  $\mathcal{A}_\eta$ is not a model of $T$ (the case
  $T\not\models\A_\xi$ follows by symmetry).  By the definition of
  $\cong_T$ we know that $\mathcal{A}_\xi$ is not a model of $T$ either,
  so there is $\varphi\in T$ such that $\mathcal{A}_\eta\models \neg
  \varphi$ and $\mathcal{A}_\xi\models \neg\varphi$. Further, there is a club $C$ such
  that for every $\alpha\in C$ we have
  $\mathcal{A}_\eta\restriction_\alpha\models \neg \varphi$ and
  $\mathcal{A}_\xi\restriction_\alpha\models \neg\varphi$. We conclude that for every
  $\alpha\in C$ we have that $\mathcal{A}_\eta\restriction_\alpha$
  and $\mathcal{A}_\xi\restriction_\alpha$ are not models of $T$, and
  $\mathcal{F}(\eta)(\alpha)=\mathcal{F}(\xi)(\alpha)=0$, so
  $(\mathcal{F}(\eta),\mathcal{F}(\xi))\in E_X$.
  
  Let us now look at the direction from right to left. 
  Suppose first that
  $\mathcal{A}_\eta$ and $\mathcal{A}_\xi$ are models of $T$, and
  $\mathcal{A}_\eta\not\cong \mathcal{A}_\xi$.
  
  By Remark~\ref{rem:Shelah}, we know that ${\bf I}\uparrow\EF^\kappa_\omega (\mathcal{A}_\eta,\mathcal{A}_\xi)$.
  By Lemma \ref{lem:EFClub} there is a club $C$ of $\a$ with
  $${\bf I}\uparrow\EF^\a_\omega (\mathcal{A}_\eta\restriction_\alpha, \mathcal{A}_\xi\restriction_\alpha),$$
  $\mathcal{A}_\xi\restriction_\alpha\models T$ and
  $\mathcal{A}_\eta\restriction_\alpha\models T$.

  Since $\{\alpha\in X\mid \eta\cap \alpha=S_\alpha\}$ is
  stationary by the definition of $\Diamond_\k(X)$, also the set
  $$\{\alpha\in X\mid \eta\cap \alpha=S_\alpha\}\cap C_\pi\cap C_{EF}$$
  is stationary and every
  $\alpha$ in this set satisfies ${\bf II}\uparrow
  \text{EF}^\kappa_\omega (\mathcal{A}_\eta\restriction_\alpha,
  \mathcal{A}_{S_\alpha})$.  Therefore 
  $$C\cap \{\alpha\in X\mid \eta\cap \alpha=S_\alpha\}\cap C_\pi\cap C_{EF}$$
  is stationary and a subset of $\mathcal{F}(\eta)^{-1}\{1\}\ \triangle\ \mathcal{F}(\xi)^{-1}\{1\}$, where $\triangle$ denotes the symmetric
  difference. We conclude that $(\mathcal{F}(\eta),\mathcal{F}(\xi))\notin E_X$.

  Let us finally assume that $(\eta,\xi)\notin\ \cong_T$ and $\mathcal{A}_\eta\not\models T$ 
  (the case $\A_\xi\not\models T$ follows by symmetry).
  Assume towards a contradiction that 
  $(\mathcal{F}(\eta),\mathcal{F}(\xi))\in E^2_{\mu\text{-club}}$. 
  Let $C$ be a club that testifies 
  $(\mathcal{F}(\eta),\mathcal{F}(\xi))\in E^2_{\mu\text{-club}}$, i.e. $C\cap (\mathcal{F}(\eta)^{-1}[1]\triangle \mathcal{F}(\xi)^{-1}[1])\cap X=\emptyset$. 
  Since $\mathcal{A}_\eta\not\models T$, the set 
  $\{\alpha<\kappa\mid \mathcal{A}_\eta\restriction_\alpha\not\models T\}$ contains a club.
  Hence, we can assume that for every $\alpha\in C$, $\mathcal{A}_\eta\restriction_\alpha\not\models T$
  which implies that $\mathcal{F}(\eta)(\alpha)=0$ and $\mathcal{F}(\xi)(\alpha)=0$ for every $\alpha\in C$.
  
  By the definition of $\cong_T$, $\mathcal{A}_\eta\not\models T$
  implies $\mathcal{A}_\xi\models T$. Therefore the set
  $\{\alpha<\kappa\mid \mathcal{A}_\xi\restriction_\alpha\models T\}$
  contains a club. So there is a club $C'$ such that every $\alpha\in C'$ 
  satisfies $\mathcal{A}_\xi\restriction_\alpha\models T$ and
  $\mathcal{F}(\xi)(\alpha)=0$.
  Since $\{\alpha\in X\mid \xi\cap \alpha=S_\alpha\}$ is stationary, again by the definition of 
  $\Diamond_\k(X)$, also 
  $\{\alpha\in X\mid\eta\cap \alpha=S_\alpha\}\cap C_\pi\cap C_{EF}$ is stationary
  and every $\alpha$ in this set satisfies 
  ${\bf II}\uparrow \text{EF}^\kappa_\omega (\mathcal{A}_\eta\restriction_\alpha, \mathcal{A}_{S_\alpha})$. 
  Therefore, 
  $$C'\cap \{\alpha\in X\mid\xi\cap \alpha=S_\alpha\}\cap C_\pi\cap C_{EF}\neq \emptyset,$$
  a contradiction.

  To show that $\mathcal{F}$ is continuous, let
  $[\eta\restriction_\alpha]$ be a basic open set, $\xi\in
  \mathcal{F}^{-1}[[\eta\restriction_\alpha]]$. Then $\xi\in
  [\xi\restriction_\a]$ and $[\xi\restriction_\a]\subseteq
  \mathcal{F}^{-1}[[\eta\restriction_\alpha]]$. We conclude that
  $\mathcal{F}$ is continuous.
\end{proof}

To define the reduction $\mathcal{F}$ it is not enough to use the
isomorphism classes of the models $\mathcal{A}_{S_\alpha}$, as opposed
to the equivalence classes of the relation defined by the EF-game. It
is possible to construct two non-isomorphic models with domain
$\kappa$ such that their restrictions to any $\alpha<\kappa$ are
isomorphic. For example the models $\mathcal{M}=(\kappa,P)$ and
$\mathcal{N}=(\kappa,Q)$, with $\kappa=\lambda^+$,
$$P=\{\alpha<\kappa\mid\alpha=\beta+2n,\ n\in \mathbb{N}\text{ and }\beta\text{ a limit ordinal}\}$$
and 
$$Q=\{\alpha<\lambda\mid\alpha=\beta+2n,\ n\in \mathbb{N}\text{ and }\beta \text{ a limit ordinal}\}$$
are non-isomorphic but
$\mathcal{M}\restriction_\alpha\cong\mathcal{N}\restriction_\alpha$
holds for every $\alpha<\kappa$.

The Borel reducibility of the isomorphism relation of classifiable
theories was studied in \cite{FHK13} and one of the main results is the
following.

\begin{theorem}(\cite[Thm 77]{FHK13})
  If a first order theory $T$ is classifiable, then for all regular cardinals $\mu<\kappa$, $E^2_{\mu\text{-club}}\nleq_B\ \cong^\k_T$.
\end{theorem}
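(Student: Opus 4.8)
The plan is to attack this on the complexity side. I will establish two facts: (i) for classifiable $T$ the relation $\cong^\k_T$ is a Borel subset of $2^\k\times 2^\k$, and (ii) the relation $E^2_{\mu\text{-club}}$ is not Borel. These are incompatible with the existence of a Borel reduction: if $f\colon 2^\k\to 2^\k$ were a Borel reduction of $E^2_{\mu\text{-club}}$ to $\cong^\k_T$, then $f\times f$ would again be Borel and we would have $E^2_{\mu\text{-club}}=(f\times f)^{-1}[\cong^\k_T]$; as the preimage of a Borel set under a Borel map is Borel, $E^2_{\mu\text{-club}}$ would be Borel, contradicting (ii). Thus everything reduces to the two complexity claims.

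For (i) I would first note that $\{\eta\mid\A_\eta\models T\}$ is Borel: the vocabulary is countable, so there are only countably many first-order sentences and hence $T$ is countable, and for each sentence $\varphi$ the set $\{\eta\mid\A_\eta\models\varphi\}$ is Borel because first-order satisfaction on the coded structures is built from the clopen atomic conditions (via $\pi$) using quantifiers, i.e.\ $\k$-unions and $\k$-intersections of bounded depth. Hence $\{\eta\mid\A_\eta\models T\}=\bigcap_{\varphi\in T}\{\eta\mid\A_\eta\models\varphi\}$ is Borel, and so is its complement. Since by its definition $\cong^\k_T$ is the union of the Borel set $\{(\eta,\xi)\mid\A_\eta\not\models T\text{ and }\A_\xi\not\models T\}$ with the set $\{(\eta,\xi)\mid\A_\eta,\A_\xi\models T\text{ and }\A_\eta\cong\A_\xi\}$, it suffices to show that the latter is Borel. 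This is where classifiability is used, through Remark~\ref{rem:Shelah}: on models of $T$, $\A_\eta\cong\A_\xi$ is equivalent to ${\bf II}\uparrow\EF^\k_\o(\A_\eta,\A_\xi)$, and by Lemma~\ref{lem:EFClub} together with the remark on $C_{\EF}$ this holds iff the set $\{\a\mid{\bf II}\uparrow\EF^\a_\o(\A_\eta\rest_\a,\A_\xi\rest_\a)\}$ contains a club, while its negation (by determinacy, again Remark~\ref{rem:Shelah}) holds iff this set is nonstationary. The map sending $(\eta,\xi)$ to the characteristic function of $\{\a\mid{\bf II}\uparrow\EF^\a_\o(\A_\eta\rest_\a,\A_\xi\rest_\a)\}$ is Borel, since for each $\a$ the condition concerns a game of length $\o$ on a structure of size $<\k$. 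Converting the resulting ``contains a club versus nonstationary'' dichotomy into an outright Borel definition of $\cong^\k_T$ is the delicate point where the fine structure of classifiable theories must be exploited; here I would appeal to the complexity analysis of \cite{FHK13}, and I expect this to be the main obstacle of the argument.

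For (ii) I would use that a Borel equivalence relation has Borel classes: for a fixed $\xi_0$ the class $\{\eta\mid(\eta,\xi_0)\in E^2_{\mu\text{-club}}\}$ is the preimage of a Borel set under the continuous map $\eta\mapsto(\eta,\xi_0)$. Choosing $\xi_0$ to be the constant function with value $1$, this class is, by the characterization noted just after the definition of $E^2_{\mu\text{-club}}$, exactly $\{\eta\mid\eta^{-1}[1]\text{ contains a }\mu\text{-club}\}$, the $\mu$-club filter set. Since this set is not Borel (it is properly $\Sigma^1_1$; this is part of the analysis of these relations in \cite{FHK13}), $E^2_{\mu\text{-club}}$ cannot be Borel, which yields (ii) and completes the contradiction.
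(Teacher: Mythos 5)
Your overall scheme --- derive the non-reducibility from a complexity mismatch, using that Borel preimages cannot raise descriptive complexity, and detect the complexity of $E^2_{\mu\text{-club}}$ through the equivalence class of the constant function $1$, which you correctly identify with the $\mu$-club filter set --- is structurally the same as the proof of \cite[Thm 77]{FHK13} (note that the present paper does not reprove this theorem, it only cites it). However, your step (i) contains a genuine gap that cannot be closed: it is \emph{false} in general that $\cong^\k_T$ is Borel when $T$ is classifiable. One of the main theorems of \cite{FHK13} is that $\cong^\k_T$ is Borel if and only if $T$ is classifiable \emph{and shallow}; for a classifiable deep theory the isomorphism relation is provably non-Borel. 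So the ``complexity analysis of \cite{FHK13}'' to which you defer at your self-identified ``main obstacle'' in fact refutes that step: the club/nonstationary dichotomy you extract from Remark~\ref{rem:Shelah} and Lemma~\ref{lem:EFClub} yields only that $\cong^\k_T$ and its complement are both $\Sigma^1_1$ (``${\bf II}$ has a winning strategy in $\EF^\k_\o$'' and ``${\bf I}$ has a winning strategy'' are existential quantifications over strategies followed by Borel conditions), i.e.\ that $\cong^\k_T$ is $\Delta^1_1$ --- and in general nothing better.

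The repair is to run your argument one level up, which is exactly what \cite{FHK13} does. For classifiable $T$ the relation $\cong^\k_T$ is $\Delta^1_1$, by Shelah's theorem together with the determinacy of $\EF^\k_\o$ on models of $T$, as above. Preimages under Borel functions preserve $\Sigma^1_1$ and $\Pi^1_1$ (the graph of a Borel function is Borel, so $f(x)\in A$ can be written with one extra existential, respectively universal, quantifier over the graph), hence a Borel reduction of $E^2_{\mu\text{-club}}$ to $\cong^\k_T$ would make $E^2_{\mu\text{-club}}$ itself $\Delta^1_1$. Correspondingly, your step (ii) must be strengthened from ``$E^2_{\mu\text{-club}}$ is not Borel'' to ``$E^2_{\mu\text{-club}}$ is not $\Delta^1_1$'': your computation of the class of the constant function $1$ is correct and reduces this to the fact that the $\mu$-club filter set is not $\Pi^1_1$ (not merely non-Borel), which is the Halko--Shelah-type result established in \cite{FHK13}. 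Non-Borelness alone, which is all you invoke, is too weak, precisely because a Borel reduction to a non-Borel $\Delta^1_1$ target gives no contradiction. With these two substitutions --- $\Delta^1_1$ in place of Borel in (i), non-$\Delta^1_1$ in place of non-Borel in (ii) --- your argument becomes correct and coincides with the cited proof.
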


\begin{corollary}
  Assume that $\Diamond_\kappa (S_\mu^\kappa)$ holds for all regular
  $\mu<\kappa$. If a first order theory $T$ is classifiable, then for
  all regular cardinals $\mu<\kappa$ we have
  $\cong^\k_T\ \leq_c E^2_{\mu\text{-club}} $ and $E^2_{\mu\text{-club}}\nleq_B\  \cong^\k_T$.
\end{corollary}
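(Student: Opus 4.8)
The plan is to recognize that this corollary is nothing more than the conjunction of Lemma~\ref{cor:ClassCub} and the cited Theorem~\cite[Thm 77]{FHK13}, specialized to the stationary set $X=S^\kappa_\mu$. So I would prove the two asserted facts separately, each by a direct appeal to an already-established result.

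For the continuous reducibility $\cong^\k_T\ \leq_c E^2_{\mu\text{-club}}$, I would first invoke the definition given just before Section~2, namely that $E^2_{\mu\text{-club}}$ is by definition the relation $E_X$ for $X=S^\kappa_\mu$. The hypothesis of the corollary provides $\Diamond_\kappa(S^\kappa_\mu)$ for every regular $\mu<\kappa$, which is exactly the instance of $\Diamond_\kappa(X)$ needed to apply Lemma~\ref{cor:ClassCub} with this choice of $X$. Since $T$ is classifiable and $\mu<\kappa$ is regular, the lemma immediately yields $\cong^\k_T\ \leq_c E_{S^\kappa_\mu}$, and the identification $E_{S^\kappa_\mu}=E^2_{\mu\text{-club}}$ finishes this half. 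For the non-reducibility $E^2_{\mu\text{-club}}\nleq_B\ \cong^\k_T$, I would simply quote the cited Theorem~\cite[Thm 77]{FHK13}, whose sole hypothesis is that $T$ is classifiable, a hypothesis already in force; note that no diamond assumption is required for this direction.

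I do not expect any genuine obstacle here, since the substantive arguments reside in Lemma~\ref{cor:ClassCub} and in \cite[Thm 77]{FHK13}. The only points demanding (minor) care are the bookkeeping identification $E^2_{\mu\text{-club}}=E_{S^\kappa_\mu}$ and the verification that the single diamond instance consumed by Lemma~\ref{cor:ClassCub} is precisely the one supplied by the hypothesis, uniformly across all regular $\mu<\kappa$. Once these are noted, the proof is a one-line application of each cited result.
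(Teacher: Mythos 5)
Your proposal is correct and matches the paper exactly: the paper states this corollary without proof, as an immediate consequence of Lemma~\ref{cor:ClassCub} (applied with $X=S^\kappa_\mu$, using the assumed $\Diamond_\kappa(S^\kappa_\mu)$) together with the quoted Theorem~77 of \cite{FHK13}, which needs only classifiability of $T$. Your bookkeeping observations (the identification $E^2_{\mu\text{-club}}=E_{S^\kappa_\mu}$ and that no diamond is needed for the non-reducibility half) are exactly the intended reading.
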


%%%%%%%%%%%%%%%%%%%%%%%%%%%%%%

\section{Non-classifiable Theories}

In \cite{FHK13} the reducibility to the
isomorphism of non-classifiable theories was studied.
In particular the following two theorems were proved there:

\begin{theorem}\label{thm:FHK79}(\cite[Thm 79]{FHK13})
Suppose that $\kappa=\lambda^+=2^\lambda$ and $\lambda^{<\lambda}=\lambda$.
\begin{enumerate}
\item If $T$ is unstable or superstable with OTOP, then $E^2_{\lambda\text{-club}}\leq_c\  \cong^\k_T$.
\item If $\lambda\ge 2^\omega$ and $T$ is superstable with DOP, then $E^2_{\lambda\text{-club}}\leq_c\ \cong^\k_T$. \qed
\end{enumerate}
\end{theorem}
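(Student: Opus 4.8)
The plan is to realize the reduction by a single uniform construction: for each $\eta\in 2^\kappa$ I would build a model $\mathcal{M}_\eta\models T$ with domain $\kappa$, depending continuously on $\eta$, so that
$$\mathcal{M}_\eta\cong\mathcal{M}_\xi\iff (\eta,\xi)\in E^2_{\lambda\text{-club}}.$$
Coding $\eta\mapsto\mathcal{M}_\eta$ as a map $\mathcal{F}\colon 2^\kappa\to 2^\kappa$ in the style of Definition~1.1 then gives $E^2_{\lambda\text{-club}}\leq_c\ \cong^\k_T$, since every value is a model of $T$. The unifying idea across the three cases is that non-classifiability supplies a \emph{coding device}: a definable mechanism to read a combinatorial invariant off a model and, conversely, to realize prescribed combinatorial data inside a model of $T$. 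For unstable $T$ one uses the order property to encode linear orders; for superstable $T$ with OTOP one encodes the same data via omitted types; and for superstable $T$ with DOP one uses the dimensional order property, the invariant being carried by the dimensions of a tree of independent models (this is the case requiring $\lambda\ge 2^\omega$, so that enough dimensions are available).

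First I would fix, uniformly in $\eta$, a combinatorial object $J_\eta$ — a colored tree of size $\kappa$, or in the unstable case a linear order — whose isomorphism type modulo the relevant ideal tracks $\eta$ exactly. The construction places, for each $\alpha\in S^\kappa_\lambda$, a ``gadget'' whose internal shape is dictated by $\eta(\alpha)$, arranged so that an isomorphism $J_\eta\cong J_\xi$ can match the $\alpha$-gadget of one to the $\alpha$-gadget of the other only when $\eta(\alpha)=\xi(\alpha)$, while agreement along a $\lambda$-club of coordinates already forces $J_\eta\cong J_\xi$. The target equivalence is $J_\eta\cong J_\xi\iff(\eta^{-1}[1]\triangle\xi^{-1}[1])\cap S^\kappa_\lambda$ is non-stationary, i.e. $(\eta,\xi)\in E^2_{\lambda\text{-club}}$. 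The hypotheses $\kappa=\lambda^+=2^\lambda$ and $\lambda^{<\lambda}=\lambda$ are exactly what make the cardinalities fit (e.g. $2^\lambda=\kappa$ many gadgets) and give $S^\kappa_\lambda$ the stationary-set combinatorics used below.

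Next I would pass from $J_\eta$ to $\mathcal{M}_\eta$ using Shelah's machinery appropriate to each class — an Ehrenfeucht--Mostowski model, its type-omitting analogue for OTOP, or an independent tree of models for DOP — so that the invariant of $J_\eta$ is \emph{faithfully} recorded in $\mathcal{M}_\eta$. Faithfulness has two halves. The easy half is that $J_\eta\cong J_\xi$ lifts to $\mathcal{M}_\eta\cong\mathcal{M}_\xi$, which together with the ``$\lambda$-club forces isomorphism'' property handles the direction $(\eta,\xi)\in E^2_{\lambda\text{-club}}\Rightarrow\mathcal{M}_\eta\cong\mathcal{M}_\xi$. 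The hard half — and the main obstacle — is the converse: showing $\mathcal{M}_\eta\not\cong\mathcal{M}_\xi$ whenever $(\eta^{-1}[1]\triangle\xi^{-1}[1])\cap S^\kappa_\lambda$ is stationary. Here I would argue that an isomorphism $F\colon\mathcal{M}_\eta\to\mathcal{M}_\xi$, via Lemma~\ref{lem:EFClub} (translating isomorphism into winning positions for {\bf II} on club-many initial segments) together with a closure/reflection argument, produces a club $C$ on which $F$ restricts to an isomorphism of the gadgets below $\alpha$. Intersecting $C$ with the stationary difference yields a coordinate $\alpha$ of cofinality $\lambda$ at which the two $\alpha$-gadgets have different shapes yet are matched by $F$, contradicting the design of the coding device. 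The delicate point is that each gadget sitting at a point of cofinality $\lambda$ must be \emph{rigid enough} that its defining invariant (a cofinality, a dimension, or a set of omitted types) is preserved by every isomorphism; this is precisely where the choice of coding device for each theory-class does the work, and a pressing-down (Fodor) argument is what pins the contradiction to a single stationary coordinate.

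Finally, continuity is routine and mirrors the verification at the end of Lemma~\ref{cor:ClassCub}: since the gadget placed at coordinate $\alpha$ depends only on $\eta\rest(\alpha+1)$, and each bit of the coded output $\mathcal{F}(\eta)$ depends on only boundedly much of $J_\eta$, any basic open condition on $\mathcal{F}(\eta)$ is determined by a bounded initial segment of $\eta$, so preimages of basic open sets are open. I expect essentially all the difficulty to lie in the rigidity of the gadgets in the hard direction, and in verifying that the three separate coding devices each meet the uniform specification of the second paragraph.
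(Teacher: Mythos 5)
This theorem is not proved in the paper at all: it is imported verbatim from \cite[Thm 79]{FHK13}, which is why it carries a \qed{} in its statement. So the only meaningful comparison is with the proof in that memoir, and at the level of architecture your sketch does mirror it: from each $\eta\in 2^\kappa$ one builds a combinatorial object (a linear order or tree) coding $\eta$ relative to $S^\kappa_\lambda$, and then feeds it into Shelah's constructions (Ehrenfeucht--Mostowski models from the order property for unstable $T$, the type-omission construction for OTOP, trees of models and dimensions for DOP), with continuity coming from the local character of the coding.

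As a proof, however, your proposal has a genuine gap: the two properties you demand of the ``coding device'' are postulated, never established, and together they \emph{are} the theorem. First, the existence of $J_\eta$ such that $\lambda$-club agreement of $\eta,\xi$ yields $J_\eta\cong J_\xi$ while stationary disagreement blocks any gadget-respecting matching is exactly the combinatorial core; in \cite{FHK13} this is a concrete construction exploiting $\lambda^{<\lambda}=\lambda$ and $\kappa=\lambda^+=2^\lambda$, and nothing in your text produces it. Second, and more seriously, the ``hard half'' cannot be obtained from Lemma~\ref{lem:EFClub} plus a Fodor argument as you suggest. From an isomorphism $\mathcal{M}_\eta\cong\mathcal{M}_\xi$ one does get, on a club of $\alpha$, isomorphisms of the initial-segment structures, but nothing forces these to respect the decomposition into gadgets or to match the $\alpha$-gadget with the $\alpha$-gadget: the gadgets are not initial segments, and for non-classifiable $T$ an abstract back-and-forth/EF argument is precisely what fails to control isomorphism. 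The actual proofs recover the invariant by class-specific model theory --- cofinalities of cuts realized by indiscernible sequences in the unstable case, the pattern of omitted types for OTOP, and dimensions of orthogonal types for DOP (which is where the hypothesis $\lambda\ge 2^\omega$ enters) --- and your appeal to gadgets being ``rigid enough'' simply names this obligation without discharging it. In short, what you have written is a correct description of the known strategy, with the theorem's content relocated into two unproved assumptions about an unspecified construction.
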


\begin{theorem}\label{thm:FHK86} (\cite[Thm 86]{FHK13})
  Suppose that for all $\gamma<\kappa$, $\gamma^\omega<\kappa$ and $T$ is a
  stable unsuperstable theory. Then $E^2_{\omega\text{-club}}\leq_c\ \cong^\k_T$. \qed
\end{theorem}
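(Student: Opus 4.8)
The plan is to realize the continuous reduction by a colored-tree construction that exploits the defining feature of stable unsuperstable theories: the existence of an infinite forking chain, that is, a type together with an increasing $\omega$-sequence of parameter sets along which it forks at every step. This makes it possible to associate, to any tree $J$ of height at most $\omega$, a model $M_J\models T$ (via an Ehrenfeucht--Fra\"{i}ss\'e / tree-indiscernible construction) so that the isomorphism type of $M_J$ is governed by the isomorphism type of $J$ as a \emph{colored} tree, with the $\omega$-branches of $J$ marking the places where the forking information is recorded inside $M_J$. The cofinality $\omega$ appearing in $E^2_{\omega\text{-club}}$ is exactly matched to the length $\omega$ of these forking chains.

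First I would attach to each $\eta\in 2^\kappa$ a colored tree $J_\eta\subseteq\kappa^{\le\omega}$ whose $\omega$-branches are indexed by the ordinals $\alpha\in S^\kappa_\omega$ (via fixed cofinal $\omega$-sequences), placing on the branch at $\alpha$ a color that records the bit $\eta(\alpha)$. The assignment $\eta\mapsto J_\eta$ is made \emph{local}: the part of $J_\eta$ determined below level $\beta$ depends only on $\eta\restriction\beta$. Coding the resulting model $M_{J_\eta}$ as an element of $2^\kappa$ then yields a map $\mathcal{G}\colon 2^\kappa\to 2^\kappa$, and this locality is precisely what makes $\mathcal{G}$ continuous, just as in the last paragraph of the proof of Lemma~\ref{cor:ClassCub}. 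The hypothesis that $\gamma^\omega<\kappa$ for all $\gamma<\kappa$ guarantees that these trees, and in particular their sets of $\omega$-branches, have size at most $\kappa$, keeping the construction inside $2^\kappa$ and bounding the relevant type-counting.

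The combinatorial heart is the equivalence $M_{J_\eta}\cong M_{J_\xi}\iff(\eta,\xi)\in E^2_{\omega\text{-club}}$. For the easy direction, if $\eta$ and $\xi$ agree on an $\omega$-club $C$, then an isomorphism $J_\eta\cong J_\xi$ of colored trees can be built by a back-and-forth that follows $C$: off $C$ the two trees are constructed identically, while on $C$ the colors agree, and this isomorphism lifts to $M_{J_\eta}\cong M_{J_\xi}$ by the functoriality of the construction in the first step. For the converse, suppose $D=\{\alpha<\kappa:\eta(\alpha)\ne\xi(\alpha)\}\cap S^\kappa_\omega$ is stationary; I would argue that any isomorphism $M_{J_\eta}\to M_{J_\xi}$ must respect the forking geometry---and with it the tree skeleton---and therefore induce a color-preserving correspondence of $\omega$-branches, after which a pressing-down argument on $D$ produces a branch, at some $\alpha\in D$, that is forced to switch color, a contradiction.

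I expect this converse to be the main obstacle. The delicate point is to show that an \emph{abstract} isomorphism between the models $M_{J_\eta}$ and $M_{J_\xi}$ is rigid enough to recover the colored-tree data: this is exactly where the full strength of stability theory for unsuperstable $T$ enters, guaranteeing that the forking geometry laid down along the $\omega$-branches cannot be permuted or dissolved by an isomorphism, so that the bit $\eta(\alpha)$ becomes an isomorphism invariant for stationarily many $\alpha\in S^\kappa_\omega$. With that rigidity in hand, the stationarity of $D$ together with Fodor's lemma closes the argument.
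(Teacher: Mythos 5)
First, a point of reference: the paper does not prove this theorem at all --- the end-of-proof symbol after the statement marks it as imported verbatim from \cite{FHK13} (Thm 86), so the paper's ``proof'' is that citation. Your outline does reconstruct the architecture of the cited proof: coloured trees $J_\eta$ attached locally (hence continuously) to $\eta\in 2^\kappa$, models $M_{J_\eta}\models T$ built over tree-indexed skeletons using an $\omega$-length forking chain witnessing unsuperstability, and the target equivalence $M_{J_\eta}\cong M_{J_\xi}\Longleftrightarrow(\eta,\xi)\in E^2_{\omega\text{-club}}$, with the hypothesis $\gamma^\omega<\kappa$ controlling the size of the trees. So the skeleton of the argument is the right one.

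As a proof, however, the proposal has a genuine gap exactly where you flag it, and that gap is not a verification to be filled in later --- it is the entire mathematical content of the theorem. The claim that an abstract isomorphism $M_{J_\eta}\to M_{J_\xi}$ ``must respect the forking geometry and therefore induce a colour-preserving correspondence of $\omega$-branches'' is false as stated: the skeleton is not a definable subset of the model, an isomorphism need not carry skeleton elements to skeleton elements, and branches need not go to branches. What the actual proof (Shelah's non-structure theory as implemented in \cite{FHK13}, building on \cite{HT91}) does is construct $M(J)$ as a prime/constructible model over the skeleton in a sufficiently controlled way that certain genuine isomorphism invariants --- realizations of limit types over the $\omega$-chains, i.e.\ dimensions attached to $\omega$-limits of the tree --- encode the colours; only after that does a club-construction combined with pressing down recover agreement on an $\omega$-club. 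Specifying that construction and proving the invariance is the hard, many-page core that your sketch replaces by the phrase ``the full strength of stability theory''; Fodor's lemma by itself cannot substitute for it. There is also a smaller but real error in your easy direction: you assert that off the $\omega$-club $C$ the two coloured trees ``are constructed identically.'' This is backwards. Off $C$ --- on a possibly stationary set --- the bits $\eta(\alpha)$ and $\xi(\alpha)$ may disagree, so the coloured branches there differ, and the desired isomorphism $J_\eta\cong J_\xi$ must rearrange branches to absorb exactly those differences; arranging enough homogeneity in the trees to make this possible is why the coloured-tree construction in \cite{FHK13} is as elaborate as it is. So both directions require constructions that the proposal does not supply.
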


Clearly from Theorems \ref{thm:FHK79} and \ref{thm:FHK86} and Corollary~\ref{cor:ClassCub} we obtain the following:

\begin{theorem}\label{thm:Main1} 
  Suppose that $\kappa=\lambda^+=2^\lambda$, $\lambda^{<\lambda}=\lambda$ and $\Diamond_\kappa (S_\lambda^\kappa)$ holds. 
\begin{enumerate}
\item If $T_1$ is classifiable and $T_2$ is unstable or superstable with OTOP, then $\cong^\k_{T_1}\ \leq_c\ \cong^\k_{T_2}$ and $\cong^\k_{T_2}\ \not\leq_B\ \cong^\k_{T_1}$.
\item If $\lambda\ge 2^\omega$, $T_1$ is classifiable and $T_2$ is superstable with DOP, then $\cong^\k_{T_1}\ \leq_c\ \cong^\k_{T_2}$ and $\cong^\k_{T_2}\ \not\leq_B\ \cong^\k_{T_1}$.
\end{enumerate}
\end{theorem}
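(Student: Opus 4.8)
The plan is to derive Theorem~\ref{thm:Main1} directly by combining the reduction results already available, using the transitivity of $\leq_c$ and $\leq_B$ together with a clean contradiction argument for the non-reducibility clauses. First I would note that under the hypotheses $\kappa=\lambda^+=2^\lambda$ and $\lambda^{<\lambda}=\lambda$, we have $\kappa^{<\kappa}=\kappa$, so the whole machinery of the excerpt applies; moreover $\lambda$ is a regular cardinal with $\lambda<\kappa$, so $S^\kappa_\lambda$ is a legitimate stationary set and $E^2_{\lambda\text{-club}}$ is exactly $E_{S^\kappa_\lambda}$ in the notation of the earlier definitions. The single diamond assumption $\Diamond_\kappa(S^\kappa_\lambda)$ is precisely the instance of $\Diamond_\kappa(X)$ needed to invoke Lemma~\ref{cor:ClassCub} with $X=S^\kappa_\lambda$ and $\mu=\lambda$.

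For the positive (reducibility) direction of each clause, I would chain two reductions. By Lemma~\ref{cor:ClassCub}, since $T_1$ is classifiable and $\Diamond_\kappa(S^\kappa_\lambda)$ holds, we obtain $\cong^\kappa_{T_1}\ \leq_c\ E_{S^\kappa_\lambda}=E^2_{\lambda\text{-club}}$. By Theorem~\ref{thm:FHK79}, under its stated hypotheses, $E^2_{\lambda\text{-club}}\ \leq_c\ \cong^\kappa_{T_2}$: clause~(1) uses part~(1) of that theorem when $T_2$ is unstable or superstable with OTOP, and clause~(2) uses part~(2) under the extra assumption $\lambda\ge 2^\omega$ when $T_2$ is superstable with DOP. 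Since continuous reducibility is transitive --- the composition of two continuous reductions is again a continuous reduction --- concatenating these yields $\cong^\kappa_{T_1}\ \leq_c\ \cong^\kappa_{T_2}$ in both cases.

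For the negative (non-reducibility) direction, I would argue by contradiction using the non-reducibility theorem for classifiable theories stated in the excerpt. Suppose $\cong^\kappa_{T_2}\ \leq_B\ \cong^\kappa_{T_1}$. Composing with the continuous (hence Borel) reduction $E^2_{\lambda\text{-club}}\ \leq_c\ \cong^\kappa_{T_2}$ from Theorem~\ref{thm:FHK79}, and using that $\leq_B$ is transitive and that $\leq_c$ implies $\leq_B$, we would obtain $E^2_{\lambda\text{-club}}\ \leq_B\ \cong^\kappa_{T_1}$. But $T_1$ is classifiable, so the cited Theorem~\cite[Thm~77]{FHK13} states that $E^2_{\mu\text{-club}}\nleq_B\ \cong^\kappa_{T_1}$ for every regular $\mu<\kappa$, in particular for $\mu=\lambda$. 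This is a direct contradiction, so $\cong^\kappa_{T_2}\ \not\leq_B\ \cong^\kappa_{T_1}$.

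The whole argument is essentially bookkeeping: the three substantive ingredients (Lemma~\ref{cor:ClassCub}, Theorem~\ref{thm:FHK79}, and \cite[Thm~77]{FHK13}) are quoted, and the only things to verify are that the hypotheses match and that the reducibility orderings compose as claimed. The main --- and only real --- obstacle is therefore careful hypothesis-checking: confirming that $\Diamond_\kappa(S^\kappa_\lambda)$ is the correct diamond instance for applying Lemma~\ref{cor:ClassCub} with $\mu=\lambda$, that $E^2_{\lambda\text{-club}}$ coincides with $E_{S^\kappa_\lambda}$, and that the extra cardinal arithmetic condition $\lambda\ge 2^\omega$ is invoked exactly in clause~(2) where Theorem~\ref{thm:FHK79}(2) requires it. No new combinatorics or model theory is needed beyond these citations.
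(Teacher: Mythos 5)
Your proposal is correct and is essentially the paper's own argument: the paper derives Theorem~\ref{thm:Main1} by exactly this chaining of Lemma~\ref{cor:ClassCub} (with $X=S^\kappa_\lambda$) and Theorem~\ref{thm:FHK79}, with \cite[Thm 77]{FHK13} supplying the non-reducibility via the same composition-and-contradiction step. Your version just spells out the hypothesis-checking (regularity of $\lambda$, $\kappa^{<\kappa}=\kappa$, where $\lambda\ge 2^\omega$ is used) that the paper leaves implicit behind the word ``Clearly.''
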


\begin{theorem}\label{thm:Main2}
  Suppose that for all $\gamma<\kappa$, $\gamma^\omega<\kappa$ and
  $\Diamond_\kappa (S_\omega^\kappa)$ holds. If $T_1$ is
  classifiable and $T_2$ is stable unsuperstable, then
  $\cong^\k_{T_1}\ \leq_c\ \cong^\k_{T_2}$ and
  $\cong^\k_{T_2}\ \not\leq_B\ \cong^\k_{T_1}$.
\end{theorem}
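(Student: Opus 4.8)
The plan is to obtain this exactly as Theorem~\ref{thm:Main1} was obtained, namely by chaining continuous reductions supplied by the results already in hand, with $E^2_{\omega\text{-club}}$ playing the role of the intermediate equivalence relation. The one point to keep in mind is that $\omega$ is a regular cardinal below $\kappa$ (since $\kappa$ is uncountable), so $E^2_{\omega\text{-club}}$ is precisely $E_{S^\kappa_\omega}$, and all the cited statements apply with $\mu=\omega$ and $X=S^\kappa_\omega$.

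For the positive half $\cong^\k_{T_1}\leq_c\cong^\k_{T_2}$ I would compose two continuous reductions. First, since $T_1$ is classifiable, $\omega<\kappa$ is regular, and $\Diamond_\kappa(S^\kappa_\omega)$ holds, Lemma~\ref{cor:ClassCub} applied with $X=S^\kappa_\omega$ gives $\cong^\k_{T_1}\leq_c E^2_{\omega\text{-club}}$. Second, the hypothesis that $\gamma^\omega<\kappa$ for all $\gamma<\kappa$, together with $T_2$ being stable unsuperstable, lets me invoke Theorem~\ref{thm:FHK86} to get $E^2_{\omega\text{-club}}\leq_c\cong^\k_{T_2}$. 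Since a composition of two continuous reductions is again a continuous reduction, these combine to $\cong^\k_{T_1}\leq_c\cong^\k_{T_2}$.

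For the negative half $\cong^\k_{T_2}\not\leq_B\cong^\k_{T_1}$ I would argue by contradiction. Suppose $\cong^\k_{T_2}\leq_B\cong^\k_{T_1}$. The reduction from Theorem~\ref{thm:FHK86} is in particular Borel, so $E^2_{\omega\text{-club}}\leq_B\cong^\k_{T_2}$; composing Borel reductions (preimages of Borel sets under Borel functions are Borel) yields $E^2_{\omega\text{-club}}\leq_B\cong^\k_{T_1}$. But $T_1$ is classifiable, so \cite[Thm~77]{FHK13} applied with $\mu=\omega$ gives $E^2_{\omega\text{-club}}\not\leq_B\cong^\k_{T_1}$, a contradiction.

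There is no genuine obstacle here: the argument is the stable-unsuperstable mirror of Theorem~\ref{thm:Main1}, the only difference being that the intermediate relation is fixed at cofinality $\omega$ rather than at $\lambda$. The sole points requiring care are bookkeeping ones, namely checking that the diamond assumption $\Diamond_\kappa(S^\kappa_\omega)$ and the arithmetic assumption $\gamma^\omega<\kappa$ match precisely the hypotheses of Lemma~\ref{cor:ClassCub}, Theorem~\ref{thm:FHK86}, and \cite[Thm~77]{FHK13}; and noting that I invoke Lemma~\ref{cor:ClassCub} directly rather than the Corollary following \cite[Thm~77]{FHK13}, since here diamond is assumed only at cofinality $\omega$ and not simultaneously at every regular $\mu<\kappa$.
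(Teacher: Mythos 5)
Your proof is correct and is essentially the paper's own argument: the paper derives Theorem~\ref{thm:Main2} directly (``clearly'') from the same three ingredients you chain together, namely Lemma~\ref{cor:ClassCub} with $X=S^\kappa_\omega$, Theorem~\ref{thm:FHK86}, and \cite[Thm 77]{FHK13} for the non-reducibility half via the same composition-and-contradiction step. Your closing remark is also well taken: since diamond is assumed only on $S^\kappa_\omega$, one must cite the Lemma itself rather than the Corollary that assumes $\Diamond_\kappa(S^\kappa_\mu)$ for all regular $\mu<\kappa$, which is exactly what the paper does.
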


\begin{corollary}
  Suppose $\kappa=\kappa^{<\kappa}=\lambda^+$ and
  $\lambda^\omega=\lambda$. If $T_1$ is classifiable and
  $T_2$ is stable unsuperstable, then $\cong^\k_{T_1}\ \leq_c\ \cong^\k_{T_2}$ and $\cong^\k_{T_2}\ \not\leq_B\ \cong^\k_{T_1}$.
\end{corollary}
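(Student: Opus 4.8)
The plan is to derive the final Corollary directly from Theorem~\ref{thm:Main2} by verifying its two hypotheses under the stated arithmetic assumptions. The hypotheses of Theorem~\ref{thm:Main2} are: (i) $\gamma^\omega<\kappa$ for every $\gamma<\kappa$, and (ii) $\Diamond_\kappa(S^\kappa_\omega)$ holds. The Corollary assumes $\kappa=\kappa^{<\kappa}=\lambda^+$ and $\lambda^\omega=\lambda$. So first I would check (i): given $\gamma<\kappa=\lambda^+$, we have $\gamma\le\lambda$, hence $\gamma^\omega\le\lambda^\omega=\lambda<\kappa$, which establishes (i) immediately from the assumption $\lambda^\omega=\lambda$.

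The main work is verifying (ii), namely that $\Diamond_\kappa(S^\kappa_\omega)$ follows from $\kappa=\lambda^+$ together with $\lambda^{<\lambda}=\lambda$ (which is implied by $\kappa^{<\kappa}=\kappa$, since $\lambda^{<\lambda}\le\kappa^{<\kappa}=\kappa=\lambda^+$ forces $\lambda^{<\lambda}=\lambda$) and $\lambda^\omega=\lambda$. Here I would invoke the relevant diamond-existence results for successor cardinals. The key fact is Shelah's theorem that for an uncountable successor cardinal $\kappa=\lambda^+$ and any stationary $S\subseteq\kappa$, the principle $\Diamond_\kappa(S)$ follows from $2^\lambda=\lambda^+$; more refined versions isolate exactly the diamond on the cofinality-$\mu$ points. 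In our situation the crucial point is that $\lambda^\omega=\lambda$ gives $2^\omega\le\lambda$ and controls the number of bounded subsets relevant to the $\omega$-cofinal points, which is what $\Diamond_\kappa(S^\kappa_\omega)$ requires. Since $\kappa^{<\kappa}=\kappa$ means $2^\lambda=\lambda^+=\kappa$, the GCH-at-$\lambda$ instance is in force, and the standard consequence is that $\Diamond_\kappa(S)$ holds for every stationary $S\subseteq\kappa$, in particular for $S=S^\kappa_\omega$.

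With both hypotheses of Theorem~\ref{thm:Main2} verified, the conclusion is immediate: for $T_1$ classifiable and $T_2$ stable unsuperstable we obtain $\cong^\k_{T_1}\ \leq_c\ \cong^\k_{T_2}$ and $\cong^\k_{T_2}\ \not\leq_B\ \cong^\k_{T_1}$, which is exactly the statement of the Corollary.

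The step I expect to be the main obstacle is the precise citation and bookkeeping for why $\Diamond_\kappa(S^\kappa_\omega)$ follows from $\kappa=\kappa^{<\kappa}=\lambda^+$ and $\lambda^\omega=\lambda$. One must be careful that the relevant diamond-existence theorem applies to the specific stationary set $S^\kappa_\omega$ and that the arithmetic hypotheses translate correctly into the form the theorem demands (typically $2^\lambda=\lambda^+$, which here is the content of $\kappa^{<\kappa}=\kappa$). The remaining verification of $\gamma^\omega<\kappa$ is a one-line cardinal-arithmetic computation and poses no difficulty.
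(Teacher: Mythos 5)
Your overall route is the same as the paper's: verify the two hypotheses of Theorem~\ref{thm:Main2} and apply it. Your verification of the first hypothesis is exactly the paper's computation ($\gamma\leq\lambda$ for $\gamma<\kappa=\lambda^+$, hence $\gamma^\omega\leq\lambda^\omega=\lambda<\kappa$), and your observation that $\kappa^{<\kappa}=\kappa=\lambda^+$ forces $2^\lambda=\lambda^+$ is correct. The problem is the second hypothesis, which you yourself flag as the main obstacle and then settle by citing a false statement. It is \emph{not} a theorem that $2^\lambda=\lambda^+$ (for uncountable $\lambda$) yields $\Diamond_\kappa(S)$ for \emph{every} stationary $S\subseteq\kappa=\lambda^+$: the well-known exceptional case is stationary sets concentrating on ordinals of cofinality $cf(\lambda)$, where diamond can consistently fail even under GCH. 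This is precisely why Shelah's theorem in \cite{Sh10} -- the one the paper quotes -- carries a cofinality restriction: if $\kappa=\lambda^+=2^\lambda$ and $S$ is a stationary subset of $\{\alpha<\kappa\mid cf(\alpha)\neq cf(\lambda)\}$, then $\Diamond_\kappa(S)$ holds. With only your unrestricted claim, the argument does not establish $\Diamond_\kappa(S^\kappa_\omega)$; for all you have said, $S^\kappa_\omega$ could consist of points of the bad cofinality.

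The missing step is a one-line observation which is the actual role of the hypothesis $\lambda^\omega=\lambda$ in the paper: by K\"onig's lemma $\lambda^{cf(\lambda)}>\lambda$, so $\lambda^\omega=\lambda$ forces $cf(\lambda)\neq\omega$, hence $S^\kappa_\omega\subseteq\{\alpha<\kappa\mid cf(\alpha)\neq cf(\lambda)\}$ and Shelah's theorem applies to give $\Diamond_\kappa(S^\kappa_\omega)$. (Your heuristic that $\lambda^\omega=\lambda$ ``controls the number of bounded subsets relevant to the $\omega$-cofinal points'' gestures at Gregory's older theorem, which from $2^\lambda=\lambda^+$ and $\lambda^\omega=\lambda$ also yields $\Diamond_\kappa(S^\kappa_\omega)$ and would be an acceptable alternative citation -- but then it is that theorem, correctly stated, that must carry the argument, not the unrestricted claim.) A further minor slip: $\kappa^{<\kappa}=\kappa$ does not imply $\lambda^{<\lambda}=\lambda$; for example $\lambda=\aleph_{\omega_1}$ under GCH satisfies $\kappa^{<\kappa}=2^\lambda=\lambda^+=\kappa$ and $\lambda^\omega=\lambda$, yet $\lambda^{<\lambda}\geq\lambda^{\omega_1}>\lambda$. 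Fortunately $\lambda^{<\lambda}=\lambda$ is not among the hypotheses of Theorem~\ref{thm:Main2}, so this error is inessential to the structure of your argument, but the diamond step as written is a genuine gap.
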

\begin{proof}
  In \cite{Sh10} Shelah proved that if $\kappa=\lambda^+=2^\lambda$
  and $S$ is a stationary subset of $\{\alpha<\kappa\mid cf(\alpha)\neq cf(\lambda)\}$, 
  then $\Diamond_\kappa(S)$ holds. Since
  $\lambda^\omega=\lambda$, we have $cf(\lambda)\neq \omega$ and
  $\Diamond_\kappa (S_\omega^\kappa)$ holds. On the other hand
  $\kappa=\lambda^+$ and $\lambda^{\omega}=\lambda$ implies
  $\gamma^\omega<\kappa$ for all $\gamma<\kappa$. By Theorem~\ref{thm:Main2} we
  conclude that if $T_1$ is a classifiable theory and $T_2$ is a
  stable unsuperstable theory, then $\cong_{T_1}\ \leq_c\ \cong_{T_2}$
  and $\cong_{T_2}\ \not\leq_B\ \cong_{T_1}$.
\end{proof}

\begin{theorem}
  Let $H(\kappa)$ be the following property: If $T$ is classifiable and $T'$ not,
  then $\cong^\k_T\ \leq_c\ \cong^\k_{T'}$ and $\cong^\k_{T'}\ \not\leq_B\ \cong^\k_T$.
  Suppose that $\kappa=\kappa^{<\kappa}=\lambda^+$, $2^\lambda>2^\omega$ and $\lambda^{<\lambda}=\lambda$.
  \begin{enumerate}
  \item If $V=L$, then $H(\kappa)$ holds.
  \item There is a $\kappa$-closed forcing notion $\mathbb{P}$ with the $\kappa^+$-c.c.
    which forces $H(\kappa)$.
  \end{enumerate}
\end{theorem}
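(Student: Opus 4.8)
The plan is to derive the final theorem by assembling the two main consistency results, Theorem~\ref{thm:Main1} and Theorem~\ref{thm:Main2}, so that together they cover every possible non-classifiable $T'$. Recall that a complete first order theory $T'$ fails to be classifiable precisely when it is unstable, or superstable with OTOP, or superstable with DOP, or stable unsuperstable. Under the hypotheses $\kappa=\kappa^{<\kappa}=\lambda^+$ and $\lambda^{<\lambda}=\lambda$, Theorem~\ref{thm:Main1} handles the first three cases \emph{provided} $\Diamond_\kappa(S^\kappa_\lambda)$ holds (and provided $\lambda\ge 2^\omega$ in the DOP case), while Theorem~\ref{thm:Main2} handles the stable unsuperstable case \emph{provided} $\Diamond_\kappa(S^\kappa_\omega)$ holds and $\gamma^\omega<\kappa$ for all $\gamma<\kappa$. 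So the entire task reduces to securing these diamond principles and cardinal-arithmetic side conditions in each of the two models, $V=L$ and the forcing extension.

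For part~(1), I would argue in $L$. First, $V=L$ gives the GCH, so the standing hypotheses are compatible and moreover $2^\lambda=\lambda^+=\kappa$ and $2^\omega=\omega_1$; the assumption $2^\lambda>2^\omega$ then forces $\lambda>\omega$, hence in particular $\lambda\ge 2^\omega=\omega_1$, which is exactly the extra hypothesis needed for the DOP clause of Theorem~\ref{thm:Main1}. Next, in $L$ the global diamond principle holds: $\Diamond_\kappa(S)$ is true for \emph{every} stationary $S\subseteq\kappa$ (this is Jensen's theorem that $V=L$ implies $\Diamond_\kappa$ on every stationary set, for $\kappa$ a successor cardinal). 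In particular both $\Diamond_\kappa(S^\kappa_\lambda)$ and $\Diamond_\kappa(S^\kappa_\omega)$ hold. Finally, $\kappa=\lambda^+$ together with the GCH value $\lambda^\omega=\lambda$ (valid since $\lambda\ge\omega_1$ and GCH) yields $\gamma^\omega<\kappa$ for all $\gamma<\kappa$, as in the argument of the preceding corollary. Thus every hypothesis of Theorems~\ref{thm:Main1} and~\ref{thm:Main2} is met, and combining them over the four cases establishes $H(\kappa)$.

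For part~(2), the strategy is to start from the given ground model (where $\kappa=\kappa^{<\kappa}=\lambda^+$, $2^\lambda>2^\omega$, $\lambda^{<\lambda}=\lambda$ already hold) and force the two needed diamonds by a $\kappa$-closed, $\kappa^+$-c.c. notion $\mathbb{P}$. The natural choice is a product or iteration of the standard diamond-forcing whose conditions are partial diamond sequences of length $<\kappa$ ordered by end-extension; such a forcing is $\kappa$-closed (so it adds no new $<\kappa$-sequences and preserves $\kappa^{<\kappa}=\kappa$, all cardinals $\le\kappa$, cofinalities, and stationarity of the relevant sets) and, by a $\Delta$-system argument using $\kappa^{<\kappa}=\kappa$, has the $\kappa^+$-c.c. (so cardinals $\ge\kappa^+$ and values such as $2^\lambda$, $2^\omega$ are preserved, keeping $2^\lambda>2^\omega$ intact). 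A genericity/density argument shows the union of the generic filter is a $\Diamond_\kappa(S^\kappa_\lambda)$- and $\Diamond_\kappa(S^\kappa_\omega)$-sequence simultaneously (force both, e.g. by taking the product of the two diamond forcings). Since $\kappa$-closure preserves $\gamma^\omega<\kappa$ for all $\gamma<\kappa$ (this is a statement about $<\kappa$-sequences, which are not added) and all the cardinal-arithmetic hypotheses survive, Theorems~\ref{thm:Main1} and~\ref{thm:Main2} apply verbatim in $V^{\mathbb{P}}$, giving $H(\kappa)$.

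The main obstacle, and the only place demanding genuine care, is the \emph{simultaneous} preservation bookkeeping in part~(2): one must check that the single forcing $\mathbb{P}$ does not merely add the diamond sequences but also leaves every numerical hypothesis unharmed -- especially that $\kappa^{<\kappa}=\kappa$ persists (needed so $2^\kappa$-style arguments and the coding framework still function), that the strict inequality $2^\lambda>2^\omega$ is not collapsed, and that stationarity of $S^\kappa_\lambda$ and $S^\kappa_\omega$ is retained so the forced sequences are genuine diamonds on the right sets. All of these follow from the $\kappa$-closure and $\kappa^+$-c.c. of $\mathbb{P}$, but assembling them into a single clean verification -- and confirming that $\kappa$-closure is available given $\lambda^{<\lambda}=\lambda$ (which makes the poset of partial sequences $\kappa$-closed rather than merely $<\lambda$-closed) -- is where the real work lies. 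I expect the model-theoretic content to contribute nothing new here: it is entirely imported through the black-box application of the two Main theorems.
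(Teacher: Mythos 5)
Your proposal is correct and follows essentially the same route as the paper: both parts are reduced to Theorems~\ref{thm:Main1} and~\ref{thm:Main2}, part~(1) by invoking Jensen's theorem that under $V=L$ the principle $\Diamond_\kappa(S)$ holds on every stationary $S\subseteq\kappa$ (together with the GCH computations giving $2^\lambda=\kappa$, $\lambda\ge 2^\omega$ and $\gamma^\omega<\kappa$), and part~(2) by a $\kappa$-closed, $\kappa^+$-c.c.\ forcing that adds the required diamond sequences while the closure and chain condition preserve all the cardinal-arithmetic hypotheses. The only difference is the implementation in part~(2): the paper takes $\mathbb{P}=\{f\colon X\rightarrow 2\mid X\subseteq\kappa,\ |X|<\kappa\}$ and cites the known facts (Kunen, plus that $\mathbb{P}$ forces $\Diamond_\kappa(S^\kappa_\mu)$ for every regular $\mu<\kappa$), whereas you force the diamonds directly by end-extension of partial diamond sequences and sketch the density argument yourself --- an interchangeable (indeed forcing-equivalent, given $\kappa^{<\kappa}=\kappa$) realization of the same idea.
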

\begin{proof}
  \begin{enumerate}
  \item This follows from Theorems \ref{thm:Main1} and \ref{thm:Main2}.
  \item Let $\mathbb{P}$ be $\{f\colon X\rightarrow 2\mid X\subseteq\kappa,|X|<\kappa\}$ 
    with the order $p\leq q$ if $q\subset p$. It
    is known that $\mathbb{P}$ has the $\kappa^+$-cc \cite[Lemma IV.7.5]{KK}
    and is $\kappa$-closed \cite[Lemma IV.7.14]{KK}. It is
    also known that $\mathbb{P}$ preserves cofinalities, cardinalities
    and subsets of $\kappa$ of size less than $\kappa$ 
    \cite[Thm IV.7.9, Lemma IV.7.15]{KK}. Therefore, in $V[G]$, $\kappa$
    satisfies $\kappa=\kappa^{<\kappa}=\lambda^+=2^\lambda>2^\omega$
    and $\lambda^{<\lambda}=\lambda$. It is known that $\mathbb{P}$
    satisfies $\mathbb{1}\Vdash_\mathbb{P} \Diamond_k(S_\mu^\kappa)$
    for every regular cardinal $\mu<\kappa$. Therefore, by Theorems  \ref{thm:Main1} and \ref{thm:Main2}
    $H(\kappa)$ holds in~$V[G]$.
\end{enumerate}
\end{proof}

\begin{definition}
  \begin{enumerate}
  \item A tree $T$ is a \emph{$\k^{+},\k$-tree}
    if does not contain chains of length $\k$ and
    its cardinality is less than $\k^+$. 
    It is \emph{closed} if every chain has
    a unique supremum.
  \item A pair $(T,h)$ is a \emph{$Borel^{*}$-code}
    if $T$ is a closed $\k^{+},\k$-tree and $h$ is a function
    with domain $T$ such that if $x\in T$ is a leaf, then
    $h(x)$ is a basic open set and otherwise
    $h(x)\in\{\cup ,\cap\}$.
  \item For an element $\eta\in 2^\k$ and a $Borel^{*}$-code
    $(T,h)$, the \emph{$Borel^{*}$-game} $B^{*}(T,h,\eta)$ is
    played as follows.  There are two players, ${\bf I}$ and
    ${\bf II}$. The game starts from the root of $T$. At each move, if
    the game is at node $x\in T$ and $h(x)=\cap$, then ${\bf I}$
    chooses an immediate successor $y$ of $x$ and the game continues
    from this $y$. If $h(x)=\cup$, then ${\bf II}$ makes the choice.  At
    limits the game continues from the (unique) supremum of the
    previous moves by Player {\bf I}. 
    Finally, if $h(x)$ is a basic open set, then the
    game ends, and ${\bf II}$ wins if and only if $\eta\in h(x)$.
  \item A set $X\subseteq 2^\k$ is a \emph{$Borel^{*}$-set}
    if there is a $Borel^{*}$-code
    $(T,h)$ such that for all $\eta\in 2^\k$,
    $\eta\in X$ if and only if ${\bf II}$ has a winning strategy in the game
    $B^{*}(T,h,\eta)$.
  \end{enumerate}
\end{definition}

Notice that a strategy in a game $B^{*}(T,h,\eta)$ can be seen as a
function $\sigma: \k^{<\kappa}\rightarrow\k$, because every $\k^+\k$-tree
can be seen as a downward closed subtree of $\k^{<\k}$.

Suppose $T$ is a closed $\k^{+},\k$-tree and $h$ is a function
    with domain $T$ such that if $x\in T$ is a leaf, then
    $h(x)$ is a Borel set and otherwise
    $h(x)\in\{\cup ,\cap\}$. Then the set $$\{\eta\mid \textit{{\bf II} has a winning strategy in the game }B^*(T,h,\eta) \}$$ is a Borel$^*$-set. This can be seen by the Borel$^*$-code $(T',h')$, where $T'$ is $T$ concatenated in every leaf, $b$, by the tree $T_b$ and $h'$ is the union of the functions $h_b$, where $(T_b,h_b)$ is a Borel$^*$-code for the set $h(b)$.

\begin{theorem}
Suppose that $\kappa=\kappa^{<\kappa}=\lambda^+$, $2^\lambda>2^\omega$ and $\lambda^{<\lambda}=\lambda$. Then the following statements are consistent.
\begin{enumerate}
\item If $T_1$ is classifiable and $T_2$ is not, then there is an
  embedding of $(\mathcal{P}(\k), \subseteq)$ to
  $(B^*(T_1,T_2),\leq_B)$, where $B^*(T_1,T_2)$ is the set of all
  Borel$^*$-equivalence relations strictly between $\cong_{T_1}$ and
  $\cong_{T_2}$.
\item If $T_1$ is classifiable and $T_2$ is unstable or superstable,
  then $$\cong^\k_{T_1}\ \leq_c  E^2_{\lambda\text{-club}}\leq_c\ \cong^\k_{T_2}\land
  \cong^\k_{T_2}\ \not\leq_B E^2_{\lambda\text{-club}}\land
  E^2_{\lambda\text{-club}}\not\leq_B\ \cong^\k_{T_1}.$$
\end{enumerate}
\end{theorem}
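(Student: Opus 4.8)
The plan is to exhibit a single model validating both statements. Over the ground model, force with $\mathbb{P}=\{f\colon X\to 2\mid X\subseteq\k,\ |X|<\k\}$ ordered by reverse inclusion, the notion used in the preceding theorem: it is $\k$-closed, has the $\k^+$-c.c., preserves cofinalities, cardinalities and bounded subsets of $\k$, forces $\k=\k^{<\k}=\lambda^+=2^\lambda$ and $\lambda^{<\lambda}=\lambda$, and forces $\Diamond_\k(X)$ for every stationary $X\subseteq\k$ (the density argument giving $\Diamond_\k(S^\k_\mu)$ there in fact produces a diamond sequence guessing on every stationary set). Since $\k^{<\k}=\k$ forces $2^\lambda=\lambda^+=\k$, the condition $2^\lambda>2^\o$ amounts to $\lambda>\o$, and then $\lambda^{<\lambda}=\lambda$ yields $\lambda^\o=\lambda\ge 2^\o$. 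I work in $V[G]$ henceforth.

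For part (2) --- where $T_2$ is unstable, or superstable and non-classifiable (hence with DOP or OTOP) --- three of the four conjuncts are immediate: $\cong^\k_{T_1}\leq_c E^2_{\lambda\text{-club}}$ is Lemma~\ref{cor:ClassCub} with $X=S^\k_\lambda$; $E^2_{\lambda\text{-club}}\leq_c\cong^\k_{T_2}$ is Theorem~\ref{thm:FHK79} (both clauses apply since $\lambda\ge 2^\o$); and $E^2_{\lambda\text{-club}}\not\leq_B\cong^\k_{T_1}$ is \cite[Thm 77]{FHK13} with $\mu=\lambda$. For the new conjunct $\cong^\k_{T_2}\not\leq_B E^2_{\lambda\text{-club}}$ I would exploit a second cofinality: the coding behind Theorem~\ref{thm:FHK79} can be run at every regular $\mu<\k$, so $E_{S^\k_\o}\leq_c\cong^\k_{T_2}$ (using $\lambda\ge 2^\o$ and $\o\ne\lambda$); and club-relations of distinct cofinalities are incomparable, i.e. $E_{S^\k_\mu}\not\leq_B E_{S^\k_\nu}$ for regular $\mu\ne\nu<\k$. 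Then $\cong^\k_{T_2}\leq_B E^2_{\lambda\text{-club}}$ would compose with $E_{S^\k_\o}\leq_c\cong^\k_{T_2}$ to give $E_{S^\k_\o}\leq_B E_{S^\k_\lambda}$, contradicting incomparability; this is exactly where $2^\lambda>2^\o$, equivalently $\o\ne\lambda$, is used.

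For part (1) I would realize $(\mathcal{P}(\k),\subseteq)$ inside the non-stationary-ideal relations. By Solovay's theorem split a stationary subset of $S^\k_\lambda$ into pairwise disjoint stationary sets $R_*$ and $R_i$ ($i<\k$); for $A\subseteq\k$ put $X_A=R_*\cup\bigcup_{i\in A}R_i$ and assign $A\mapsto E_{X_A}$. Each $E_{X_A}$ is a Borel$^*$ equivalence relation, since the non-stationarity condition defining it is computed by the club-guessing game on a closed $\k^+,\k$-tree, which is a Borel$^*$-code as in the construction preceding this theorem. Each $E_{X_A}$ lies strictly between $\cong_{T_1}$ and $\cong_{T_2}$: $\cong_{T_1}\leq_c E_{X_A}$ by Lemma~\ref{cor:ClassCub} with $\Diamond_\k(X_A)$; $E_{X_A}\leq_B E_{S^\k_\lambda}\leq_c\cong_{T_2}$ because $X_A\subseteq S^\k_\lambda$; $E_{X_A}\not\leq_B\cong_{T_1}$ by the argument of \cite[Thm 77]{FHK13}, valid for any stationary $X\subseteq S^\k_\lambda$; and $\cong_{T_2}\not\leq_B E_{X_A}$ because $E_{X_A}\leq_B E_{S^\k_\lambda}$ reduces this to $\cong_{T_2}\not\leq_B E^2_{\lambda\text{-club}}$ from part (2). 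Finally $A\mapsto E_{X_A}$ is a poset embedding: if $A\subseteq B$, the continuous map retaining the coordinates in $X_A$ and setting those in $X_B\setminus X_A$ to $0$ witnesses $E_{X_A}\leq_c E_{X_B}$; if $A\not\subseteq B$, pick $i\in A\setminus B$, so $R_i\subseteq X_A\setminus X_B$ is stationary and a reflection argument for stationary subsets of a single cofinality gives $E_{X_A}\not\leq_B E_{X_B}$. As the pieces are disjoint and stationary, $A\subseteq B\iff E_{X_A}\leq_B E_{X_B}$, which is the required embedding into $(B^*(T_1,T_2),\leq_B)$.

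The heart of the argument, and its main obstacle, is set-theoretic rather than model-theoretic: the two non-reducibility facts --- the cross-cofinality incomparability $E_{S^\k_\mu}\not\leq_B E_{S^\k_\nu}$ for regular $\mu\ne\nu<\k$, used for both cruxes, and the single-cofinality incomparability $E_X\not\leq_B E_Y$ when $X\setminus Y$ is stationary, used for the poset embedding --- both of which require delicate reflection/pressing-down arguments on Borel maps. A secondary point to verify is that the Theorem~\ref{thm:FHK79} construction genuinely runs at cofinality $\o$ for the theories at hand. In both cruxes the hypothesis $2^\lambda>2^\o$ enters precisely by furnishing a cofinality $\o\ne\lambda$ at which $\cong_{T_2}$ carries stationary information invisible to $E^2_{\lambda\text{-club}}$. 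Granting these facts, both (1) and (2) hold in $V[G]$, establishing their consistency.
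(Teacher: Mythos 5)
Your reduction of both parts to two ``incomparability facts'' --- the cross-cofinality claim $E_{S^\k_\mu}\not\leq_B E_{S^\k_\nu}$ for regular $\mu\neq\nu<\k$, and the single-cofinality claim $E_X\not\leq_B E_Y$ whenever $X\setminus Y$ is stationary --- is where the proof breaks down. Neither is a theorem of ZFC, and neither is known to hold (indeed, both are in danger of failing) in the plain Cohen extension $V[G]$ you work in: it is consistent that all of these relations are mutually Borel reducible (in $L$-like models, where $\Diamond$ holds on every stationary set, each $E^2_{\mu\text{-club}}$ is in fact $\Sigma^1_1$-complete, hence they are pairwise bireducible), so no ``reflection/pressing-down argument on Borel maps'' can establish them outright, and since $\mathbb{P}$ is homogeneous and preserves this $L$-likeness, a Cohen extension of $L$ is exactly a model where your two facts plausibly fail. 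You acknowledge these facts as the ``main obstacle'' and then grant them; but they carry the entire weight of the non-reducibility half of both statements, so this is assuming what must be proved. A secondary unjustified step is your claim that the coding behind Theorem~\ref{thm:FHK79} ``can be run at every regular $\mu<\kappa$'' to get $E_{S^\k_\o}\leq_c\ \cong^\k_{T_2}$: the paper only has the $\lambda$-club version for unstable/OTOP/DOP theories and the $\o$-club version for stable unsuperstable ones.

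The paper's proof is structured precisely to avoid needing such ZFC facts: it forces with $\mathbb{P}*\mathbb{Q}$, where $\mathbb{Q}$ is an iteration designed to kill the unwanted reductions, and the two parts use \emph{different} iterations (the theorem asserts consistency of each statement, not both in one model obtained from $\mathbb{P}$ alone). For part (1), $\mathbb{Q}$ is a modification of the forcing of \cite[Thm 52]{FHK13}, which generically destroys every potential reduction and thereby forces $(*)$: stationary sets $K(\mu,A)\subsetneq S^\k_\mu$ with $E_{K(\mu,A)}\leq_B E_{K(\mu,B)}$ if and only if $A\subset B$; the embedding is then $A\mapsto E_{K(\mu,A)}$, and $\cong^\k_{T_2}\ \not\leq_B E_{K(\mu,A)}$ follows by composing any putative reduction with $E_{K(\mu,B)}\leq_c\ \cong^\k_{T_2}$ and applying $(*)$. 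For part (2), $\mathbb{Q}$ is the iteration from \cite[Thm 3.1]{HK12} forcing that $\cong^\k_{T_2}$ is \emph{not} a Borel$^*$-set; then $\cong^\k_{T_2}\ \not\leq_B E^2_{\lambda\text{-club}}$ follows because $E_X$ is always Borel$^*$ (the club-game coding) and Borel preimages of Borel$^*$-sets are Borel$^*$. Your Solovay-splitting map $A\mapsto E_{X_A}$ and its monotone direction ($A\subseteq B$ implies $E_{X_A}\leq_c E_{X_B}$ by zeroing out coordinates in $X_B\setminus X_A$) are correct and match the first step of the FHK construction, but the converse direction, and likewise the fourth conjunct of part (2), must come from the iterated forcing, not from ZFC plus $\Diamond$.
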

\begin{proof}
We will start the proof with two claims.
\begin{claim}
If $\Diamond_\k(S)$ holds in $V$ and $\mathbb{Q}$ is $\k$-closed, then $\Diamond_\k(S)$ holds in every $\mathbb{Q}$-generic extension.
\end{claim}
\begin{proof}
Let us proceed by contradiction. Suppose $(S_\a)_{\a\in S}$ is a $\Diamond_\k(S)$-sequence in $V$ but not in $V[G]$, for some generic $G$. Fix the names $\check{S},\dot{C},\dot{X}\in V^\mathbb{Q}$ and $p\in G$, such that: 
$$p\Vdash (\dot{C}\subseteq \check{\k} \text{ is a club }\wedge \dot{X}\subseteq \check{\k}\wedge\forall \a\in\dot{C}[\check{S}_\a\neq \dot{X}\cap\a]).$$ Working in $V$, we choose by recursion $p_\a,\beta_\a,\theta_\a$,and $\delta_\a$ such that:
\begin{enumerate}
\item $p_\a\in \mathbb{Q}$, $p_0=p$ and $p_\a\ge p_\gamma$ if $\a\leq \gamma$.
\item $\beta_\a\leq \beta_\gamma$ if $\a\leq \gamma$.
\item $\beta_\a\leq\theta_\a,\delta_\a< \beta_{\a+1}$.
\item If $\gamma$ is a limit ordinal, then $\beta_\gamma=\delta_\gamma=\cup_{\a<\gamma} \beta_{\a}$.
\item $p_{\a+1}\Vdash (\check{\delta}_\a\in\check{C}\wedge \dot{X}\cap\check{\beta}_\a=\check{S}_{\theta_\a})$.
\end{enumerate}
We will show how to choose them such that 1-5 are satisfied.  First,
for the successor step assume that for some $\alpha<\k$ we have chosen
$p_{\a+1},\beta_\a,\theta_\a$ and $\delta_\a$. We choose any ordinal
satisfying 3 as $\beta_{\a+1}$. Since $p_{\a+1}\Vdash
(\dot{C}\subseteq \check{\k} \text{ is a club})$, there exists
$q\in \mathbb{Q}$ stronger than $p_{\a+1}$ and $\delta<\k$ such that
$q\Vdash( \check{\delta}\in\dot{C}\wedge\check{\beta}_\a
\leq\check{\delta})$. Now set $\delta_{\a+1}=\delta$. Since
$\mathbb{Q}$ is $\k$-closed, there exists $Y\in
\mathcal{P}(\beta_{\a+1})^V$ and $r\in \mathbb{Q}$ stronger than $q$
such that $r\Vdash \dot{X}\cap \check{\beta}_{\a+1}=\check{Y}$. By
$\Diamond_\k(S)$ in $V$, the set $\{\gamma<\k\mid Y=S_\gamma\}$ is
stationary, so we can choose the least ordinal
$\theta_{\a+1}\ge\beta_{\a+1}$ such that $r\Vdash \dot{X}\cap
\check{\beta}_{\a+1}=\check{S}_{\theta_{\a+1}}$. Clearly $r=p_{\a+2}$
satisfies 1 and~5.

For the limit step, assume that for some limit ordinal $\alpha<\k$ we have
chosen $p_{\gamma},\beta_\gamma,\theta_\gamma$ and $\delta_\gamma$ for
every $\gamma<\alpha$. Note that by 4 we know how to choose
$\beta_\a$ and $\delta_\a$. Since $\mathbb{Q}$ is $\k$-closed, there
exists $p_\a$ that satisfies 1. We choose $\theta_\a$ as in the
successor case with $q=p_\a$ and $p_{\a+1}$ as the condition
$r$ used to choose $\theta_\a$.

Define $A,B$ and $C_\delta$ by $B=\cup_{\a<\k}S_{\theta_\a}$,
$A=\{\a\in S\mid B\cap\a=S_\a\}$ and $C_\delta=\{\delta_\a\mid
\a\text{ is a limit ordinal}\}$. Note that $C_\delta$ is a club. By
$\Diamond_\k(S)$ in $V$, $A$ is stationary and $A\cap
C_\delta\neq\emptyset$. Let $\delta_\a\in A\cap C_\delta$. Then by 1, 2 and
5, for every $\gamma>\a$ we have $p_{\gamma+1}\Vdash
(\check{S}_{\theta_\a}=\check{S}_{\theta_\gamma}\cap\check{\beta}_\a)$. Therefore,
$S_{\theta_\a}=B\cap \beta_\a$ and $\delta_\a\in A\cap C_\delta$ and so by
4 we have $S_{\theta_\a}=B\cap \delta_\a=S_{\delta_\a}$. But now by 5 
we get
$p_{\a+1}\Vdash
(\check{\delta}_\a\in\check{C}\wedge
\dot{X}\cap\check{\delta}_\a=\check{S}_{\delta_\a})$ which is a contradiction.
\end{proof}

\begin{claim}
For all stationary 
$X\subseteq \kappa$, the relation $E_X$ is a Borel$^*$-set.
\end{claim}
\begin{proof}
The idea is to code the club-game into the Borel$^*$-game: in the club-game
the players pick ordinals one after another and if the limit is in a predefined
set $A$, then the second player wins. 
Define $T_X$ as the tree whose elements are all the
increasing elements of $\kappa^{\leq\lambda}$, ordered by
end-extension. For every element of $T_X$ that is not a leaf, define 
$$H_X(x)=\begin{cases} \cup &\mbox{if } x \text{ has an immediate predecessor } x^- \text{ and } H_X(x^-)=\cap\\ 
  \cap & \mbox{otherwise } \end{cases}
  $$
and for every leaf $b$ define $H_X(b)$ by:
$$(\eta,\xi)\in H_X(b)\Longleftrightarrow \text{ for every }\a\in \lim(\ran(b))\cap X(\eta(\a)=\xi(\a))$$ where $\a\in \lim(\ran(b))$ if $sup(\a\cap \ran(b))=\a$.

Let us assume there is a winning strategy $\sigma$ for Player ${\bf II}$ in the
game $B^*(T_X,H_X,(\eta,\xi))$ and let us conclude that $(\eta,\xi)\in
E_X$. Clearly
by the definition of $H_X$ we know that $\eta$ and $\xi$ coincide in the set
$B=\{\a<\k\mid \sigma[dom(\sigma)\cap\a^{<\lambda}]\subset\a^{<\lambda}\}\cap
X$. Since $\lambda^{<\lambda}=\lambda$, we know that $B'=\{\a<\k\mid
\sigma[dom(\sigma)\cap\a^{<\lambda}]\subset\a^{<\lambda}\}$ is closed and unbounded. Therefore, there exists a club that doesn't intersect
$(\eta^{-1}[1]\triangle\xi^{-1}[1])\cap X$.

For the other direction, assume that $(\eta^{-1}[1]\triangle\xi^{-1}[1])\cap
X$ is not stationary and denote by $C$ the club that does not
intersect $(\eta^{-1}[1]\triangle\xi^{-1}[1])\cap X$. The second player has
a winning strategy for the game $B^*(T_\lambda,H_X,(\eta,\xi))$: she
makes sure that, if $b$ is the leaf in which the game ends and $A\subset \ran(b)$ is such that $\sup(\cup A)\in X$, then
$\sup(\cup A)\in C$. This can be done by always choosing elements $f\in
\kappa^{<\lambda}$ such that $\sup(\ran(f))\in C$.
\end{proof}

Let $\mathbb{P}$ be $\{f\colon X\rightarrow 2\mid X\subseteq\kappa,|X|<\kappa\}$ with the order $p\leq q$ if $q\subset p$. It is known that in any $\mathbb{P}$-generic extension, $V[G]$, $\Diamond_\k(S)$ holds for every $S\in V$, $S$ a stationary subset of $\k$. 
\begin{enumerate}
\item In \cite[Thm 52]{FHK13} the following was proved under the assumption $\k=\lambda^+$ and GCH:

\textit{For every $\mu<\k$ there is a $\kappa$-closed forcing notion $\mathbb{Q}$ with the $\kappa^+$-c.c.
    which forces that there are stationary sets $K(A)\subsetneq S^\k_\mu$ for each $A\subsetneq \kappa$ such that $E_{K(A)}\not\leq_B E_{K(B)}$ if and only if $A\not\subset B$.}
    
In \cite[Thm 52]{FHK13} the proof starts by taking $(S_i)_{i<\k}$, $\k$ pairwise disjoint stationary subsets of $lim( S_\mu^\k)=\{\alpha\in S^\k_\mu\mid \a \text{ is a limit ordinal in }S_\mu^\k\}$, and defining $K(A)=\cup_{\a\in A}S_\a$. $\mathbb{Q}$ is an iterated forcing that satisfies:
For every name $\sigma$ of a function $f:2^\k\to2^\k$, exists $\beta<\k$ such that, $\mathbb{P}_\beta\Vdash ``\sigma \textit{ is not a reduction}''$. 

With a small modification on the iteration, it is possible to construct $\mathbb{Q}$ a $\kappa$-closed forcing with the $\kappa^+$-c.c. that forces 
\begin{itemize}
\item[$(*)$] For $\mu\in\{\o,\lambda\}$ and $A\subsetneq \kappa$, there are stationary sets
  $K(\mu,A)\subsetneq S^\k_\mu$ for which $E_{K(\mu,A)}\not\leq_B E_{K(\mu,B)}$ if and only if
  $A\not\subset B$.
\end{itemize}
Without loss of generality we may assume that GCH holds in $V$. Let $G$ be a
$\mathbb{P}*\mathbb{Q}$-generic. It is enough to prove that for every
$A\subsetneq \k$ in $V[G]$ the following holds:
\begin{enumerate}
\item If $T_2$ is unstable, or superstable with OTOP or with DOP, then $E_{K(\lambda,A)}\in B^*(T_1,T_2)$.
\item If $T_2$ is stable unsuperstable, then $E_{K(\omega,A)}\in B^*(T_1,T_2)$.
\end{enumerate} 
In both cases the proof is the same; we will only consider (a).

Working in $V[G]$, let $T_2$ be as in (a). Since $\mathbb{Q}$
is $\kappa$-closed, we have $V[G]\models \Diamond_\kappa(S)$ for every 
stationary $S\subset \k$, $S\in V$. Since $\mathbb{P}$ and
$\mathbb{Q}$ are $\kappa$-closed and have the $\kappa^+$-c.c., we have
$\kappa=\kappa^{<\kappa}=\lambda^+$, $2^\lambda>2^\omega$ and
$\lambda^{<\lambda}=\lambda$. By Lemma \ref{cor:ClassCub}, Theorems
3.1 and 3.4, we have that $\cong^\k_{T_1}\ \leq_c
E_{K(\lambda,A)}\leq_c\ \cong^\k_{T_2}$ holds for every $A\subsetneq
\k$. The argument in the proof of Theorem 2.4 can be used to prove
that $E_{K(\lambda,A)}\not\leq_B \ \cong^\k_{T_1}$ holds for every
$A\subsetneq \k$.

To show that $\cong^\k_{T_2}\ \not\leq_B E_{K(\lambda,A)}$ holds for
every $A\subsetneq \k$, assume towards a contradiction that there exists
$B\subsetneq \k$ such that $\cong^\k_{T_2}\ \leq_B
E_{K(\lambda,B)}$. But then $E_{K(\lambda,A)}\leq_B E_{K(\lambda,B)}$
holds for every $A\subsetneq \k$ and by $(*)$, $A\subsetneq B$ for
every $A\subsetneq \k$. So $B=\k$ which is a contradiction.

\item In \cite[Thm 3.1]{HK12} it is proved (under the assumptions
  $2^\k=\k^+$ and $\kappa=\kappa^{<\kappa}>\o$) that there is a
  generic extension in which $\cong^\k_{DLO}$ is not a
  Borel$^*$-set. The forcing is constructed using the following claim
  \cite[Claim 3.1.5]{HK12}:

  \textit{For each $(t,h)$ there exists a $\k^+$-c.c. $\k$-closed
    forcing $\mathbb{R}(t,h)$ such that in any
    $\mathbb{R}(t,h)$-generic extension $\cong^\k_{DLO}$ is not a
    Borel$^*$-set.}

  The forcing in \cite[Thm 3.1]{HK12} works for every theory $T$ that
  is unstable, or $T$ non-classifiable and superstable (not only
  $DLO$, see \cite{HK12} and \cite{HT91}). Therefore, this claim can
  be generalized to:

\textit{For each $(t,h)$ there exists a $\k^+$-c.c. $\k$-closed forcing $\mathbb{R}(t,h)$ such that in any $\mathbb{R}(t,h)$-generic extension, $\cong^\k_{T}$ is not a Borel$^*$-set, for all $T$ unstable, or $T$ non-classifiable and superstable.}

By iterating this forcing (as in \cite[Thm 3.1]{HK12}), we construct a $\kappa$-closed forcing $\mathbb{Q}$, $\kappa^+$-c.c. that forces \textit{$\cong^\k_{T}$ is not a Borel$^*$-set}, for all $T$ unstable, or $T$ non-classifiable and superstable. 

Wwithout loss of generality we may assume that $2^\k=\k^+$ holds in $V$. Let
$G$ be a $\mathbb{P}*\mathbb{Q}$-generic. Since $\mathbb{Q}$ is
$\kappa$-closed, $V[G]\models \Diamond_\kappa(S)$ for every 
stationary $S\subset \k$, $S\in V$. Since $\mathbb{P}$ and
$\mathbb{Q}$ are $\kappa$-closed and have the $\kappa^+$-c.c., we have
$\kappa=\kappa^{<\kappa}=\lambda^+$, $2^\lambda>2^\omega$ and
$\lambda^{<\lambda}=\lambda$. Working in $V[G]$, let $T_2$ be
unstable, or non-classifiable and superstable. By Lemma
\ref{cor:ClassCub}, Theorems \ref{thm:Main1} and \ref{thm:Main2}
we finally have that
$\cong^\k_{T_1}\ \leq_c E^2_{\lambda\text{-club}}\leq_c\ \cong^\k_{T_2}$
and $E^2_{\lambda\text{-club}}\not\leq_B \ \cong^\k_{T_1}$ holds.

Since $2^\k\times 2^\k$ is homeomorphic to $2^\k$, in order to finish
the proof, it is enough to show that if $f\colon 2^\k\to 2^\k$ is Borel,
then for all Borel$^*$-sets $A$, the set $f^{-1}[A]$ is a Borel$^*$. This
is because if $f$ were the reduction $\cong^\k_{T_2}\ \leq_B E^2_{\lambda\text{-club}}$,
we would have $(f\times f)^{-1}[E^2_{\lambda\text{-club}}]=\ \cong^\k_{T_2}$ and since
$E^2_{\lambda\text{-club}}$ is Borel$^*$, this would yield the latter Borel$^*$ as well.

\begin{claim}
  Assume $f\colon 2^\k\to 2^\k$ is a Borel function and $B\subset
  2^\k$ is Borel$^*$. Then $f^{-1}[B]$ is Borel$^*$.
\end{claim}
\begin{proof}
  Let $(T_B,H_B)$ be a Borel$^*$-code for $B$. Define the
  Borel$^*$-code $(T_A,H_A)$ by letting $T_B=T_A$ and $H_A(b)=f^{-1}[H_B(b)]$
  for every branch $b$ of $T_B$. Let $A$ be the Borel$^*$-set coded by
  $(T_A,H_A)$. Clearly, ${\bf II}\uparrow B^*(T_B,H_B,\eta)$ if and
  only if ${\bf II}\uparrow B^*(T_A,H_A,f^{-1}(\eta))$, so $f^{-1}[B]=A$.
\end{proof}
\end{enumerate}
\end{proof}

We end this paper with an open question:

\begin{Question}
  Is it provable in ZFC that $\cong^\k_T\ \lneq_B\ \cong^\k_{T'}$ 
  (note the strict inequality)  
  for all complete
  first-order theories $T$ and $T'$, $T$ classifiable and $T'$ not?
  How much can the cardinality assumptions on~$\k$ be relaxed?
\end{Question}

%%%%%%%%%%%%%%%%%%%%%%%%%%%%%

%\bibliographystyle{amsalpha}
%\bibliography{refs}
\providecommand{\bysame}{\leavevmode\hbox to3em{\hrulefill}\thinspace}
\providecommand{\MR}{\relax\ifhmode\unskip\space\fi MR }
% \MRhref is called by the amsart/book/proc definition of \MR.
\providecommand{\MRhref}[2]{%
  \href{http://www.ams.org/mathscinet-getitem?mr=#1}{#2}
}
\providecommand{\href}[2]{#2}

\end{document}